\def\beq{\begin{equation}}
\def\eeq{\end{equation}}
\def\baq{\begin{eqnarray}}
\def\eaq{\end{eqnarray}}
\def\baqn{\begin{eqnarray*}}
\def\eaqn{\end{eqnarray*}}
\theoremstyle{plain}
\newtheorem{definition}{Definition}
\newtheorem{remark}{Remark}
\newtheorem{example}{Example}
\newtheorem{theorem}{Theorem}
\newtheorem{lemma}[theorem]{Lemma}
\newtheorem{corollary}[theorem]{Corollary}
\newtheorem{proposition}[theorem]{Proposition}
\newcommand{\R}{{\mathbb R}}
\newcommand{\interior}{{\rm int}\kern 0.06em}
\def\<{\langle}
\def\>{\rangle}
\renewcommand*{\backrefalt}[4]{%
\ifcase #1 %
(Not cited)%
\or
(Cited on p.~#2)%
\else
(Cited on pp.~#2)%
\fi
}
\begin{document}
\title{ {On the Equilibria Computation of   Set-{{Valued}} Lur'e Dynamical Systems}}
\author{Phan Quoc Khanh \thanks{Analytical and Algebraic Methods in Optimization Research Group, Faculty of Mathematics and Statistics, Ton Duc Thang University, Ho Chi Minh City, Vietnam\vskip 0mm
Email: \texttt{phanquockhanh@tdtu.edu.vn}} \qquad 
 Le  Ba Khiet \thanks{Analytical and Algebraic Methods in Optimization Research Group, Faculty of Mathematics and Statistics, Ton Duc Thang University, Ho Chi Minh City, Vietnam\vskip 0mm
 E-mail: \texttt{lebakhiet@tdtu.edu.vn (Corresponding author)}}
 }
 
\maketitle

\begin{abstract}
In this article,   we propose an efficient  way to compute  equilibria of  a general class of set-valued Lur'e  dynamical systems, which plays an important role in the asymptotical analysis of the systems. Besides the equilibria computation, our study can be also used to solve a class of quasi-variational inequalities. Some examples of finding  Nash {quasi-equilibria} in game theory are given.
\end{abstract}

{\bf Keywords.}  Lur'e set-valued dynamical systems; Equilibria computation; Maximal monotone operators; Quasi-variational inequalities

{\bf AMS Subject Classification.} 28B05, 34A36, 34A60, 49J52, 49J53, 93D20

\section{Introduction}
\label{intro}
 Set-valued Lur'e dynamical systems are a  fundamental model in control theory, engineering and applied mathematics (see, e.g., \cite{Acary,ahl,ahl2,br1,bg2,bh,blme,BT,cs,Huang,Huang1,L1,L2,L3,tbc} and the references therein). Although the well-posedness, stability analysis, control and observers for  set-valued Lur'e  systems have been extensively investigated, there has not been much research on the equilibria computation for such systems. It is known that finding equilibria of dynamical systems is important for understanding the asymptotic behavior of the systems. This fact motivates us to consider the equilibria computation for the following general class of  set-valued Lur'e dynamical systems:
 \begin{subequations}
\label{eq:tot}
\begin{empheq}[left={({\mathcal L})}\empheqlbrace]{align}
  & \dot{x}(t) = -f(x(t))+B\lambda(t),\; {\rm a.e.} \; t \in [0,+\infty), \label{1a}\\
  & y(t)=Cx(t)+D\lambda(t), t\geq 0,\\
  &  \lambda(t)   \in -\mathcal{F}(y(t)), \;t\ge 0,\\
  & x(0) = x_0,
\end{empheq}
\end{subequations}
where $H_1, H_2$ are Hilbert spaces, $f: H_1\to H_1$ is monotone  and Lipschitz continuous, $B:H_2\to H_1, C: H_1\to H_2, D: H_2\to H_2$ are linear and bounded, and $\mathcal{F}: H_2\rightrightarrows H_2$ is maximal monotone.  {{Some reformulations of system ($\mathcal L$) can be obtained  as follows. From (1b) and (1c), write $\lambda(\cdot)$ in terms of $x(\cdot)$ as  $\lambda(\cdot)\in (F^{-1}+D)^{-1}Cx(\cdot)$. We have   $Cx(\cdot)\in (D+\mathcal{F}^{-1})(-\lambda(\cdot))$ and so  ($\mathcal L$) is equivalent to the differential inclusion}}
\beq\label{sysm}
\dot{x} \in -\mathcal{H}(x), \;\;\;x(0) = x_0,
\eeq
where $\mathcal{H}(x)=f(x)+B(\mathcal{F}^{-1}+D)^{-1}Cx$. If $D=0$, then $\mathcal{H}(x)=f(x)+B\mathcal{F}(Cx)$, which is ubiquitously considered in the literature on set-valued Lur'e systems. The non-trivial extension $D\neq 0$ can be found widely in electrical circuits (see, e.g., \cite{Acary,ahl,br1,bh,blme,cs,L3,tbc}). It is a natural generalization   since the output $y(t)$ may depend not only on the state $x(t)$ but also the connecting variable $\lambda(t)$.
{{By the definition, $x^*$ being an equilibrium}} of (\ref{sysm}) {{means}} that 
\beq\label{main}
0 \in f(x^*)+B(\mathcal{F}^{-1}+D)^{-1}(Cx^*),
\eeq
{{i.e.,}} zero belongs to {{the}} sum of two operators: 
\beq
0\in \mathcal{A}(x^*)+\mathcal{B}(x^*),
 \eeq
 where $\mathcal{A}=f$ and $\mathcal{B}=B(\mathcal{F}^{-1}+D)^{-1}C$. {{Inclusion (4) is a stationary condition and $x^*$ is its solution.}} The equilibria computation of (\ref{main}) when $D=0$ and $B=C^T$ was considered in \cite{al2}. Obviously, the extension case $D\neq 0$ makes the analysis more complicated and interesting. 
 Under the well-known  particular passivity condition  $PB=C^T$ for some  symmetric, positive definite operator $P: H_1 \to H_1$, we can reduce our problem to {{finding}} zeros of {{a sum}} of two maximal monotone operators and then we can use  forward-backward algorithms \cite{Attouch0,Attouch,Bauschke,Chen,Tseng} to solve it. This leads  to the problem of computing the  resolvent of the operator $(\mathcal{F}^{-1}+D)^{-1}$ in terms of the resolvent of $F$ (Lemma \ref{comp1}), which is one of our motivations. We also consider the case where the  condition $PB=C^T$ is relaxed   by {{a}} general passivity condition. Let us note that  the fully implicit discretization of $({\mathcal L})$
\beq\label{discf}
\frac{x_{n+1}-x_n}{h} \in -f(x_{n+1})-B(\mathcal{F}^{-1}+D)^{-1}(Cx_{n+1})
\eeq
or semi-implicit discretization
\beq\label{discs}
\frac{x_{n+1}-x_n}{h} \in -f(x_{n})-B(\mathcal{F}^{-1}+D)^{-1}(Cx_{n+1})
\eeq
can {{also be}} reduced to the form of (\ref{main}). 
 
 On the other hand, this technique can not only be used for computing equilibria of Lur'e system $({\mathcal L})$  but also  for solving a class of  {{quasi-variational inequalities}} (QVIs).  Indeed, a particular class of (\ref{main}) when $B=C=I$ and $\mathcal{F}=N_{\Omega}$, the normal cone to a closed convex set $\Omega$, is 
 \beq\label{ree}
 0 \in f(x^*)+(N_\Omega^{-1}+D)^{-1}(x^*).
 \eeq
 We prove that (Proposition \ref{equi}) this class is equivalent to the following quasi-variational inequality: 
  \beq\label{quasi}
 0 \in   f(x^*)+N_\Omega(x^*+Df(x^*))= f(x^*)+N_{K(x^*)}(x^*)
\eeq 
{{involving}} the state-dependent set $K(x):=\Omega-Df(x)$. The term $Df$  plays the role of a perturbation that affects  the state variable under the normal cone operator.
 Applications of QVIs can be found largely in economics, transportation, mechanics, electrical circuits, etc. (see, e.g., \cite{Anh,Bensoussan,Bliemer,Kravchuk,Pang}). If $D=0$, then (\ref{quasi}) becomes the standard variational {{inequality}} problem 
 which is at the core of many constrained optimization problems  and has been intensively investigated in the literature.  It is known that solving quasi-variational inequalities  is far  more difficult than solving variational inequalities  and  {{is possible only}} under quite restrictive conditions, usually based on {{strong monotonicity assumptions}}.  One of the well-known conditions \cite{al1,Nesterov} requires that $f$ is $\mu$-strongly monotone and $L$-Lipschitz continuous, the state-dependent set $K$ is  $l$-Lipschitz continuous, and $l<\mu/L$, which is very small. Then,   quasi-variational inequality (\ref{quasi}) can be  approximated by a sequence of variational inequalities   \cite{Nesterov} or  reduced to a new variational inequality  \cite{al1} to obtain the linear convergence. 
 We show that the  restrictive condition $l<\mu/L$ can be removed in our   class.  It is sufficient to compute the  resolvent of  $(N_\Omega^{-1}+D)^{-1}$ in terms of the resolvent  of $N_\Omega$, which is  the projector operator onto $\Omega$.  If $f$ is strongly monotone,
(\ref{ree}) can be reduced  to a {{strongly monotone}} variational inequality 
  \beq\label{str}
0\in (f^{-1}+D)^{-1}(y^*)+N_\Omega(y^*),
\eeq
where $(f^{-1}+D)^{-1}$ is strongly monotone, single-valued, Lipschitz continuous, and $y^*=x^*+Df(x^*)$.  Solving (\ref{str}) is remarkably easier than solving (\ref{ree}).

The paper is organized as follows.  In Section \ref{sec2}, we recall some definitions and needed results in the theory of monotone operators and passive systems. In Section \ref{sec3},  we provide an efficient way to compute equilibria of set-valued Lur'e systems.  Applications to QVIs and Nash games  are presented in  Section \ref{sec4} and Section \ref{sec5} respectively. Section \ref{sec6} provides some numerical experiments to validate the developed theoretical results.  The paper  ends in Section \ref{sec7} with some conclusions and perspectives.

\section{Notation and preliminaries} \label{sec2}
Let  $H$  be  a {real} Hilbert space  with {the inner product $\langle \cdot,\cdot \rangle$ and the associated norm $\Vert \cdot \Vert$.}  The distance and the projection from a point $x$ to a closed convex subset $C$ of $H$ is defined as follows
$${ d}(x,C):=\inf_{y\in C} \|x-y\|, \;\;{\rm proj}_C(x):={y \in C \;\;{\rm such \;that \;} { d}(x,C)= \|x-y\|}.$$
The normal cone  to  $K$ at $x\in K$ is 
\beq
N_K(x):=\{x^*\in H: \langle x^*, y-x \rangle\le 0\;\;{\rm \;for\;all}\;y\in K\}.
\eeq
  A  function  $f: H\to H$ is called $L$-Lipschitz continuous ($L>0$) if
\beq
\Vert f(x)-f(y)\Vert \le L \Vert x-y \Vert\;\;{\rm \;for\;all}\;x, y\in H.
\eeq
It is called non-expansive if  $L\le 1$  and contractive if $L<1$.  
\noindent The domain, range, graph, and inverse of a set-valued mapping $\mathcal{F}: {H}\rightrightarrows {H}$ are defined respectively by
$${\rm dom}(\mathcal{F})=\{x\in {H}:\;\mathcal{F}(x)\neq \emptyset\},\;\;{\rm rge}(\mathcal{F})=\displaystyle\bigcup_{x\in{H}}\mathcal{F}(x)\;\;$$
 and
 $$\;\;{\rm gph}(\mathcal{F})=\{(x,y): x\in{H}, y\in \mathcal{F}(x)\},\;\;\;\mathcal{F}^{-1}(y)=\{ x\in H: y\in \mathcal{F}(x) \}.$$

\noindent {The mapping $\mathcal{F}$ is called monotone if 
$$
\langle x^*-y^*,x-y \rangle \ge 0 \;\;{\rm \;for\;all}\;x, y\in H, x^*\in \mathcal{F}(x) \;{\rm and}\;y^*\in \mathcal{F}(y).
$$
We say that $\mathcal{F}$ is strongly monotone  if there exists $\mu>0$ such that
\beq
\langle   x^*-y^*, x-y \rangle \ge \mu\Vert x-y \Vert^2\;\;{\rm \;for\;all}\;x, y\in H, x^*\in \mathcal{F}(x) \;{\rm and}\;y^*\in \mathcal{F}(y).
\eeq
Furthermore, if there is no monotone operator $\mathcal{G}$ such that the graph of $\mathcal{F}$ is strictly {contained} in  the graph of $\mathcal{G}$, then $\mathcal{F}$ is called maximal monotone. If $C$ is a nonempty closed, convex set, then $N_C$ is a maximal monotone operator.
 \noindent {The resolvent  of $\mathcal{F}$ is defined  by
$$
J_\mathcal{F}:=(I+\mathcal{F})^{-1}.
$$
}
We say that a linear bounded mapping $E: H\to H$  is 
 \begin{itemize}
\item  $positive$ $semidefinite$, written  $E\ge 0$, if 
$$\langle Ex,x \rangle \ge 0\;\;{\rm\; for\;all}\;x\in H;$$
\item $positive$ $definite$,  written  $E> 0$, if there exists $c>0$ such that 
$$
\langle Ex,x \rangle \ge c\|x\|^2\;\;{\rm\; for\;all}\;x\in H;
$$
\item $semi$-$coercive$  if there exists $c>0$ such that 
$$
\langle Ex,x \rangle \ge c\|x\|^2\;\;{\rm\; for\;all}\;x\in {\rm rge}(E+E^T),
$$
where $E^T$ denotes the transpose of $E$.
\end{itemize}
Next we recall   Minty's theorem in Hilbert spaces (see, e.g., \cite{Brezis}).
\begin{proposition}\label{minty}
Let $H$ be a Hilbert space and $\mathcal{F}: H \rightrightarrows H$ be a monotone operator. Then, $\mathcal{F}$ is maximal monotone if and only if ${\rm rge}(\mathcal{F}+\lambda I)=H$ for some $\lambda>0$.
\end{proposition}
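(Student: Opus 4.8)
The plan is to treat the two implications separately, first reducing to the case $\lambda=1$. Since multiplication by $1/\lambda$ is a linear bijection of $H$, and $\tfrac1\lambda\mathcal F$ is monotone (resp.\ maximal monotone) exactly when $\mathcal F$ is, one has ${\rm rge}(\mathcal F+\lambda I)=H$ if and only if ${\rm rge}(\tfrac1\lambda\mathcal F+I)=H$. Thus it suffices to characterize surjectivity of $I+\mathcal F$, and I would state everything for $J_{\mathcal F}=(I+\mathcal F)^{-1}$.

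For the implication ``surjectivity $\Rightarrow$ maximality'' I would argue by contradiction. Suppose $\mathcal F$ is monotone with ${\rm rge}(I+\mathcal F)=H$ but not maximal; then there is a pair $(u,v)\notin{\rm gph}(\mathcal F)$ with $\langle v-y,\,u-x\rangle\ge 0$ for every $(x,y)\in{\rm gph}(\mathcal F)$. By surjectivity the point $u+v$ lies in ${\rm rge}(I+\mathcal F)$, so there is $(x_0,y_0)\in{\rm gph}(\mathcal F)$ with $x_0+y_0=u+v$, i.e.\ $v-y_0=-(u-x_0)$. Substituting $(x_0,y_0)$ into the displayed inequality gives $-\|u-x_0\|^2\ge 0$, hence $u=x_0$ and then $v=y_0$, contradicting $(u,v)\notin{\rm gph}(\mathcal F)$. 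This closes the easy direction, and the key point is simply that surjectivity lets us hit the ``offending'' sum $u+v$.

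For ``maximality $\Rightarrow$ surjectivity'' I would show that $D:={\rm rge}(I+\mathcal F)$ is nonempty, closed and open, and conclude $D=H$ by connectedness of $H$. Nonemptiness is clear once ${\rm dom}(\mathcal F)\neq\emptyset$. Closedness is the soft part: if $f_n=x_n+y_n\to f$ with $y_n\in\mathcal F(x_n)$, monotonicity applied to two indices gives $\|x_n-x_m\|^2\le\langle x_n-x_m,\,f_n-f_m\rangle\le\|x_n-x_m\|\,\|f_n-f_m\|$, so $(x_n)$ is Cauchy; letting $x_n\to x$ and $y_n=f_n-x_n\to f-x$, maximality forces $(x,\,f-x)\in{\rm gph}(\mathcal F)$ since the graph of a maximal monotone operator is strongly closed, whence $f\in D$. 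The same estimate shows $J_{\mathcal F}$ is single-valued and nonexpansive on $D$.

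The hard part will be openness of $D$, which is where maximality is genuinely used to manufacture solutions. Fixing $f_0\in D$ and $f$ near $f_0$, I would recast the inclusion $f\in x+\mathcal F(x)$ as a fixed-point problem: using the resolvent identity, the sought $x$ is a fixed point of a map of the form $x\mapsto J_{t}\big((1-t)x+t f\big)$, which is a $|1-t|$-contraction (for $0<t<2$) as soon as the resolvent at parameter $t$ is known to be everywhere defined, so Banach's theorem produces the solution for all $f$ in a neighbourhood. The delicate issue is securing a first ``seed'' parameter at which the resolvent is globally defined; in a general, possibly infinite-dimensional, Hilbert space this cannot be bootstrapped from nonexpansiveness alone and is typically supplied by a finite-dimensional Galerkin approximation — solving the strongly monotone finite-dimensional problems via Brouwer's theorem, extracting a weak limit from the a priori bound furnished by strong monotonicity, and passing to the limit by Minty's monotonicity trick together with the demiclosedness of the graph. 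Once openness is established, connectedness of $H$ yields $D=H$, i.e.\ ${\rm rge}(I+\mathcal F)=H$, completing the proof.
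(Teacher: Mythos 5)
The paper offers no proof of this proposition at all: it is recalled as Minty's classical theorem with a pointer to Brezis's monograph, so your argument has to be judged entirely on its own. The parts you do carry out are fine: the rescaling reduction to $\lambda=1$, the implication ``${\rm rge}(I+\mathcal{F})=H$ $\Rightarrow$ maximality'' (hitting the offending pair $(u,v)$ with a graph point summing to $u+v$), and the closedness of $D:={\rm rge}(I+\mathcal{F})$ via the estimate $\Vert x_n-x_m\Vert^2\le \langle x_n-x_m, f_n-f_m\rangle$ together with strong closedness of the graph of a maximal monotone operator.

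The genuine gap is the openness step, which is where the whole difficulty of the theorem lives, and your plan for it is circular. The map $x\mapsto J_t\big((1-t)x+tf\big)$ is only defined, let alone a $|1-t|$-contraction, once $J_t=(I+t\mathcal{F})^{-1}$ is known to have \emph{full} domain for some $t\in(0,2)$; nothing about the local hypothesis $f_0\in D$ enters. But ``${\rm rge}(I+t\mathcal{F})=H$ for some $t>0$'' is, by your own rescaling observation, exactly the statement ``${\rm rge}(\mathcal{F}+\lambda I)=H$ for some $\lambda>0$'' (take $\lambda=1/t$) --- i.e.\ the conclusion of the proposition. So the ``seed'' you defer to is not a technical lemma, it is the theorem itself, and once one has it the open/closed/connected scaffolding is redundant (the contraction argument alone already gives $D=H$). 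The Galerkin sketch you invoke to produce the seed does not close the gap either: the Browder--Minty/Galerkin--Brouwer scheme applies to everywhere-defined, single-valued, hemicontinuous monotone operators, whereas here $\mathcal{F}$ is an arbitrary set-valued maximal monotone operator whose domain may meet no finite-dimensional subspace, so the finite-dimensional problems you propose to solve need not even be well posed. A correct proof of this direction requires a genuinely different ingredient --- for instance the Debrunner--Flor extension lemma, or a Kirszbraun nonexpansive extension of $2J_{\mathcal{F}}-I$ (yielding a firmly nonexpansive extension of $J_{\mathcal{F}}$ with full domain, whose inverse minus the identity is a monotone extension of $\mathcal{F}$, forced by maximality to coincide with $\mathcal{F}$), or the Fitzpatrick-function/convex-duality argument --- none of which your proposal supplies.
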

 Let $E$ be a positive definite linear bounded operator. 
 {{We introduce the resolvent  of $\mathcal{F}$ with respect to $E$ as follows}}
$$
J^E_\mathcal{F}:=(E+\mathcal{F})^{-1}.
$$
\begin{proposition}
If $\mathcal{F}: H \rightrightarrows H$ is maximal monotone and $E: H \to H$ is a positive definite linear bounded operator, then  $J^E_\mathcal{F}$ is single-valued with full domain and Lipschitz continuous.
\end{proposition}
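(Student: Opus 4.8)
The plan is to separate the three claims. Single-valuedness and Lipschitz continuity are elementary consequences of the monotonicity of $\mathcal{F}$ together with the positive definiteness of $E$, whereas the full-domain (surjectivity) statement is the substantive part, and it is there that Minty's theorem (Proposition \ref{minty}) enters.

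First I would dispatch the Lipschitz estimate, which also yields single-valuedness for free. Let $c>0$ be the constant with $\dotp{Ex}{x}\ge c\norm{x}^2$ for all $x$. Given $y_1,y_2\in H$, suppose $x_i\in J^E_\mathcal{F}(y_i)$, that is $y_i-Ex_i\in\mathcal{F}(x_i)$ for $i=1,2$. Applying monotonicity of $\mathcal{F}$ to the pairs $(x_1,y_1-Ex_1)$ and $(x_2,y_2-Ex_2)$ gives
\beq
\dotp{(y_1-y_2)-E(x_1-x_2)}{x_1-x_2}\ge 0,
\eeq
hence $c\norm{x_1-x_2}^2\le\dotp{E(x_1-x_2)}{x_1-x_2}\le\dotp{y_1-y_2}{x_1-x_2}\le\norm{y_1-y_2}\,\norm{x_1-x_2}$ by Cauchy--Schwarz. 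Therefore $\norm{x_1-x_2}\le\frac{1}{c}\norm{y_1-y_2}$, so any two images coincide when $y_1=y_2$ (single-valuedness) and $J^E_\mathcal{F}$ is $\frac{1}{c}$-Lipschitz on its domain.

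For the full domain I must show $\mathrm{rge}(E+\mathcal{F})=H$, i.e. that $y\in Ex+\mathcal{F}(x)$ is solvable for every $y$. Since $\dotp{Ex}{x}\ge c\norm{x}^2$, the operator $E-cI$ is monotone; being linear, bounded and everywhere defined, it perturbs $\mathcal{F}$ into a maximal monotone operator $\mathcal{G}:=\mathcal{F}+(E-cI)$ (a monotone, Lipschitz, full-domain single-valued perturbation preserves maximal monotonicity). Then $E+\mathcal{F}=\mathcal{G}+cI$, and applying Proposition \ref{minty} to $\mathcal{G}$ with $\lambda=c$ gives $\mathrm{rge}(\mathcal{G}+cI)=H$, i.e. $\mathrm{rge}(E+\mathcal{F})=H$. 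This establishes that $J^E_\mathcal{F}$ has full domain.

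The main obstacle is precisely this surjectivity step, and more narrowly the claim that $\mathcal{G}$ is maximal monotone: Minty's theorem converts maximal monotonicity into the range equality, but one still needs to \emph{know} $\mathcal{G}$ is maximal, which relies on a perturbation result for sums of maximal monotone operators. To keep the argument self-contained I would instead prove $\mathrm{rge}(E+\mathcal{F})=H$ directly by a fixed-point argument: solving $y\in Ex+\mathcal{F}(x)$ is equivalent to $x=J_{\gamma\mathcal{F}}(x-\gamma Ex+\gamma y)$ for any $\gamma>0$, and since $J_{\gamma\mathcal{F}}$ is nonexpansive while $x\mapsto x-\gamma Ex$ satisfies $\norm{u-\gamma Eu}^2\le(1-2\gamma c+\gamma^2\norm{E}^2)\norm{u}^2$, choosing $0<\gamma<2c/\norm{E}^2$ makes the composed map a contraction; the Banach fixed-point theorem then yields existence (and again uniqueness) of the solution. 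This route proves all three properties simultaneously and avoids invoking the sum theorem.
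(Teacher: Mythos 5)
Your proof is correct, and its first half coincides with the paper's: both apply the monotonicity of $\mathcal{F}$ to two graph points $(x_i, y_i - Ex_i)$, use $\langle Eu,u\rangle \ge c\Vert u\Vert^2$, and finish with Cauchy--Schwarz to obtain single-valuedness and the $\tfrac{1}{c}$-Lipschitz bound. Where you genuinely diverge is the full-domain claim. The paper simply asserts that $E+\mathcal{F}$ is maximal monotone and strongly monotone and then invokes Minty's theorem; the maximality assertion is precisely the perturbation (sum) theorem for a monotone, Lipschitz, everywhere-defined single-valued perturbation of a maximal monotone operator (cf.\ \cite[Lemma 2.4]{Brezis}, which the paper only cites explicitly in a later proof), and it is left unproven. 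Your first sketch, via $\mathcal{G}:=\mathcal{F}+(E-cI)$ and Minty with $\lambda=c$, is that same route made explicit, and you correctly identify the sum theorem as the hidden ingredient. Your replacement --- rewriting $y\in Ex+\mathcal{F}(x)$ as the fixed-point equation $x=J_{\gamma\mathcal{F}}(x-\gamma Ex+\gamma y)$ and applying the Banach fixed-point theorem with $0<\gamma<2c/\Vert E\Vert^2$ --- is sound and more self-contained: the estimate $\Vert (I-\gamma E)u\Vert^2\le (1-2\gamma c+\gamma^2\Vert E\Vert^2)\Vert u\Vert^2$ is correct, and the factor lies in $[0,1)$ because $c\le \Vert E\Vert$. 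The one ingredient you should flag is that you need full domain (and nonexpansiveness) of $J_{\gamma\mathcal{F}}$ for your specific small $\gamma$: the paper's Proposition \ref{minty} literally gives surjectivity of $\mathcal{F}+\lambda I$ only for \emph{some} $\lambda>0$, so you are invoking the standard, slightly stronger form of Minty's theorem valid for \emph{every} $\lambda>0$ (nonexpansiveness itself is free --- it is your own Lipschitz computation applied to $I+\gamma\mathcal{F}$, giving constant $1$). In exchange for this standard fact you avoid the sum theorem entirely, so your argument is arguably more elementary and fills a gap the paper glosses over, at the cost of being longer.
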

\begin{proof}
Since $E: H \to H$ is positive definite and $\mathcal{F}: H \rightrightarrows H$ is  maximal monotone, we deduce that $E+\mathcal{F}$ is maximal monotone and strongly monotone and thus $J^E_\mathcal{F}$ has full domain (Minty's theorem). 
Let $x_i\in H$ and $y_i\in J^E_\mathcal{F}x_i, i=1, 2$. We have 
$$
x_i\in \mathcal{F}(y_i)+Ey_i.
$$
There exists some $c>0$ such that
$$
\Vert x_1-x_2 \Vert \Vert y_1-y_2  \Vert \ge \langle x_1-x_2, y_1-y_2 \rangle\ge c \Vert y_1-y_2\Vert^2,
$$
which implies that $J^E_\mathcal{F}$ is single-valued and Lipschitz continuous.
\end{proof}
\begin{remark}
In general, if we do not have the explicit form of $J^E_\mathcal{F}$, we can compute it easily. Indeed let $y=J^E_\mathcal{F}x=(E+\mathcal{F})^{-1}(x)$. Then, $0\in \mathcal{F}(y)+Ey-x$. Let $g(y):=Ey-x$. Then, $g$ is single-valued, Lipschitz continuous, and strongly monotone. Then, we can use the forward-backward algorithms (see, e.g., \cite{Tseng}) to compute $y$ for a given $x$ with linear convergence rate.  
\end{remark}

{{For Hilbert spaces $H_1,H_2$, let}} a Lipschitz continuous mapping $f: {H_1\to H_1}$ and linear bounded operators $B:{H_2\to H_1}, C: {H_1\to H_2} $, $D: {H_2\to H_2}$ be given. We generalize the notion of passive systems  (see, e.g., \cite{blme}) for nonlinear $f$ as follows.
\begin{definition} \label{passi} The system $(f,B,C,D)$ is called $P$-$passive$ if there exists a symmetric positive linear bounded mapping $P: {H_1\to H_1}$ such that  for all $x_1,x_2\in H_1$ and  $y_1,y_2\in H_2$, one has
\beq\label{passi}
\langle P(f(x_1)-f(x_2)), x_1-x_2 \rangle+ \langle (PB-C^T) (y_1-y_2), x_1-x_2 \rangle+ \langle D(y_1-y_2),y_1-y_2 \rangle \ge 0.
\eeq
\end{definition}
\begin{remark}
i) If $f, B, C, D$ are  matrices, then $(f,B,C,D)$ is $P$-passive if and only if  the matrix 
$$\left( \begin{array}{cc}
 (Pf+f^TP)& \;\;\;\;PB-C^T\\ \\
B^TP-C & (D+D^T)
\end{array} \right)$$
is positive semidefinite  \cite{blme}. \\
ii) If $(f,B,C,D)$ is $P$-passive, then $Pf$ and $D$ are monotone. 
\end{remark}

\section{Main results}\label{sec3}

  In this section, we provide a method to compute an equilibrium point of the Lur'e set-valued dynamical system $({\mathcal L})$, i.e., a {{zero point}}  of the inclusion  (\ref{main}). First, let us suppose that the following assumptions  are satisfied, which are usually used in the literature on set-valued Lur'e systems (see, e.g., \cite{ahl,bg2,bh,BT,cs,L1,tbc}).\\

\noindent$\mathbf{Assumption \;1}:$ The operator $ f: H_1 \to H_1$ is $L_f$-Lipschitz continuous. The operators $B: H_2 \to H_1, C: H_1 \to H_2, D: H_2 \to H_2$ are linear bounded.  There exists a symmetric, positive definite linear bounded operator $P: H_1 \to H_1$ such that $PB=C^T$. In addition, $Pf$ and $D$ are monotone.\\

%

\noindent$\mathbf{Assumption \;2}:$ The set-valued operator $\mathcal{F}: H_2 \rightrightarrows H_2$ is maximal monotone. \\

\noindent$\mathbf{Assumption \;3}:$   $0\in {\rm ri}({\rm rge}({C})-{\rm rge}({\mathcal{F}}^{-1}+D))$, where ${\rm ri}(S)$ denotes the relative interior of set $S$.
\begin{remark}
 It is easy to see that, under Assumption 1, the system $(f,B,C,D)$ is  $P$-$passive$. 
\end{remark}

%

The following result provides a way to compute the resolvent of $\mathcal{B}:=(\mathcal{F}^{-1}+D)^{-1}$ based on the resolvent  of $\mathcal{F}$ with respect to a positive definite linear operator, which can be used to  solve (\ref{main}) in general.

 \begin{lemma}\label{comp1}
Let $x\in H$ and $\gamma>0$. Then, $y=J_{\gamma \mathcal{B}}(x)$ if and only if  $y=E(\gamma J_\mathcal{F}^E(Ex)+Dx)$, where $E:=(\gamma I+D)^{-1}$ is a positive definite linear bounded operator. 
\end{lemma}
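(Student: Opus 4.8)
The plan is to verify the stated closed form by unravelling both resolvents through the definition of the inverse of a set-valued map, and then to reconcile the resulting expression for $y$ with the claimed one using the algebraic identity $E^{-1}=\gamma I+D$.

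First I would check that $E$ is a well-defined, positive definite, linear bounded operator, since this is part of the assertion. Under Assumption~1 the operator $D$ is monotone, so $\gamma I+D$ is strongly monotone and maximal monotone; by Minty's theorem (Proposition~\ref{minty}) it is a bijection of $H$, whence $E=(\gamma I+D)^{-1}$ is single-valued with full domain. Writing $u=Ex$, so that $x=(\gamma I+D)u$, gives $\langle Ex,x\rangle=\gamma\|u\|^2+\langle Du,u\rangle\ge\gamma\|u\|^2\ge \frac{\gamma}{(\gamma+\|D\|)^2}\|x\|^2$, which shows $E>0$; in particular $J^E_{\mathcal F}$ is the single-valued resolvent of the preceding proposition. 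I would also record the commutation identity $ED=DE=I-\gamma E$, which follows from $D=E^{-1}-\gamma I$.

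The core is a chain of equivalences. Starting from $y=J_{\gamma\mathcal B}(x)$, i.e. $\tfrac{x-y}{\gamma}\in\mathcal B(y)=(\mathcal F^{-1}+D)^{-1}(y)$, I set $v:=\tfrac{x-y}{\gamma}$ and invert to get $y\in\mathcal F^{-1}(v)+Dv$, equivalently $v\in\mathcal F(y-Dv)$. Putting $z:=y-Dv$ and using $y=x-\gamma v$, one finds $z=x-(\gamma I+D)v=x-E^{-1}v$, hence $v=Ex-Ez$. Substituting back, the membership $v\in\mathcal F(z)$ becomes $Ex-Ez\in\mathcal F(z)$, i.e. $Ex\in(E+\mathcal F)(z)$, which is exactly $z=J^E_{\mathcal F}(Ex)$. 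Finally, recovering $y=x-\gamma v=x-\gamma Ex+\gamma Ez$ and using $x-\gamma Ex=EDx$ yields $y=E(\gamma z+Dx)=E\bigl(\gamma J^E_{\mathcal F}(Ex)+Dx\bigr)$. Since every step is an equivalence, the two-sided claim follows.

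The main obstacle I anticipate is bookkeeping rather than depth: keeping the substitution $z=y-Dv$ consistent throughout and exploiting $E^{-1}=\gamma I+D$ (equivalently $ED=I-\gamma E$) to collapse $x-\gamma Ex$ into $EDx$, so that the recovered $y$ matches the claimed form $E(\gamma J^E_{\mathcal F}(Ex)+Dx)$. One must also make sure each implication in the chain is genuinely reversible; this holds because inversion of a set-valued map and the resolvent characterization $z=(E+\mathcal F)^{-1}(Ex)\iff Ex\in Ez+\mathcal F(z)$ are equivalences by definition.
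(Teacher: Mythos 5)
Your proposal is correct and follows essentially the same route as the paper: both verify that $E=(\gamma I+D)^{-1}$ is positive definite via the estimate $\langle Ex,x\rangle\ge\frac{\gamma}{(\gamma+\|D\|)^2}\|x\|^2$, and both unravel $y=J_{\gamma\mathcal B}(x)$ through the substitution $z=y-Dv$ with $v=\frac{x-y}{\gamma}$ (the paper's $z=y+Dh(y)$ with $h(y)=\frac{y-x}{\gamma}$ is the same change of variables) to arrive at $z=J^E_{\mathcal F}(Ex)$ and $y=E(\gamma z+Dx)$. Your use of the identity $ED=I-\gamma E$ to collapse $x-\gamma Ex$ is only a cosmetic variant of the paper's direct solution of $\gamma z+Dx=(\gamma I+D)y$.
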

\begin{proof} First we check that $E:=(\gamma I+D)^{-1}$ is a positive definite linear bounded operator. Since $D$ is monotone, we deduce that $\gamma I+D$ is strongly monotone and $E$ is well-defined and single-valued. Let $x_i=Ey_i = (\gamma I+D)^{-1}y_i, i= 1,2,$ and $\lambda \in \R$. One has $y_i= \gamma x_i+Dx_i$ and thus $y_1+\lambda y_2=\gamma (x_1+\lambda x_2)+D(x_1+\lambda x_2)$. Therefore, $E(y_1+\lambda y_2)=x_1+\lambda x_2$, which implies that $E$ is linear. For all $y\in H$, let $x=Ey=(\gamma I+D)^{-1}(y)$, i.e., $y=\gamma x+Dx$.  Then, $\Vert y \Vert^2=\gamma \langle x, y \rangle +\langle Dx, y \rangle \le \gamma \Vert x \Vert \Vert y \Vert+\Vert D \Vert \Vert x \Vert \Vert y \Vert \Rightarrow \Vert y \Vert \le (\gamma+\Vert D \Vert) \Vert x \Vert$. Hence,
$$
\langle Ey,y \rangle = \langle x, \gamma x+Dx \rangle \ge\gamma \Vert x\Vert^2 \ge \frac{\gamma}{(\gamma+\Vert D \Vert)^2} \Vert y\Vert^2.
$$
Thus, $E$ is positive definite. 
 We have  the following chain of equivalent relations 
 \baqn
 &&y= J_{\gamma \mathcal{B}}(x)=(I+\gamma \mathcal{B})^{-1}(x) \Leftrightarrow x\in y +\gamma (\mathcal{F}^{-1}+D)^{-1}(y)  \\
 &\Leftrightarrow& \frac{x-y}{\gamma}\in (\mathcal{F}^{-1}+D)^{-1}(y) \Leftrightarrow y \in \mathcal{F}^{-1}(\frac{x-y}{\gamma})+D \frac{x-y}{\gamma} \\
  &\Leftrightarrow&   \frac{x-y}{\gamma}\in \mathcal{F}(y+D\frac{y-x}{\gamma}) \Leftrightarrow 0\in h(y)+ \mathcal{F}(y+Dh(y)),
 \eaqn
  where  $h(y)=\frac{y-x}{\gamma}$. Note that $h$ is {{$\frac{1}{\gamma}$-}}strongly monotone and $\frac{1}{\gamma}$-Lipschitz continuous. Let $z=y+Dh(y)=\gamma h(y)+x+Dh(y)$, which implies that $h(y)=(\gamma I+D)^{-1} (z-x)=E(z-x)=Ez-Ex$ since $D$ is monotone and linear.  Then, the last inclusion is equivalent to 
  \baqn
  0\in Ez-Ex+\mathcal{F}(z), \,{\rm i.e.,} \,z=J_\mathcal{F}^E(Ex).
  \eaqn
  On the other hand,
  $$
  z=y+Dh(y)=y+D\frac{y-x}{\gamma},$$
  which is equivalent to  $$y=(\gamma I+{D})^{-1}(\gamma z+Dx)=E(\gamma z+Dx).
  $$
  The conclusion follows. 
 \end{proof}
  Note that if the resolvent of $\mathcal{B}=(\mathcal{F}^{-1}+D)^{-1}$ is available, then we can compute the resolvent of $C^T(\mathcal{F}^{-1}+D)^{-1}C$ by using the fixed-point approach (see, e.g., \cite{al2,Micchelli}). The following algorithm is based on the Tseng algorithm \cite{Tseng} applied to our problem (\ref{main}). Let $\tilde{f}:= Pf$ and $G:={C}^T(\mathcal{F}^{-1}+D)^{-1} {C}={C}^T \mathcal{B} C$. Since $D$ is monotone, from Assumptions 2, 3 we deduce that $G$ is a maximal monotone operator (see, e.g., \cite{Pennanen}).\\
  
 \noindent \textbf{Algorithm \;1}: Compute $J_{\gamma \mathcal{B}}$ and then $J_{\gamma G}$. Given $x_0\in H$, for $n\ge 0$, we compute \\
 \beq \left\{
\begin{array}{lll}
 y_{n}=J_{\gamma G}\Big(x_n- \gamma \tilde{f}(x_n)\Big),\\
&&\\
x_{n+1}= y_n- \gamma (\tilde{f}(y_n)-\tilde{f}(x_n)). \\
\end{array}\right. \label{sysh}
\eeq

 \begin{theorem}\label{mthe}
  Let Assumptions 1--3 hold. Let $(x_n)$ be the sequence generated by  Algorithm 1. Then, $(x_n)$ converges weakly to an equilibrium point $x^*$ of the Lur'e set-valued dynamical system $({\mathcal L})$, i. e., $x^*$ satisfies  the inclusion  (\ref{main}).
 \end{theorem}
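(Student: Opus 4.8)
The plan is to recast the inclusion (\ref{main}) as the problem of finding a zero of the sum of a single-valued monotone Lipschitz operator and a maximal monotone operator, and then to recognize Algorithm 1 as Tseng's forward-backward-forward splitting applied to this sum.

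First I would perform the passivity normalization. Since $P$ is symmetric positive definite it is an invertible bijection with $P0=0$, so for any $x^*$ and any set $S\subseteq H_1$ one has $0\in S$ if and only if $0\in P(S)$; applying this to $S=f(x^*)+B\mathcal{B}(Cx^*)$ with $\mathcal{B}=(\mathcal{F}^{-1}+D)^{-1}$ and using linearity of $P$, inclusion (\ref{main}) is equivalent to $0\in Pf(x^*)+PB\,\mathcal{B}(Cx^*)$. Invoking $PB=C^T$ from Assumption 1, this becomes
\beq
0\in \tilde{f}(x^*)+C^T\mathcal{B}(Cx^*)=\tilde{f}(x^*)+G(x^*),
\eeq
with $\tilde{f}=Pf$ and $G=C^T\mathcal{B}C$. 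Thus an $x^*$ is an equilibrium of $({\mathcal L})$ (equivalently, solves (\ref{main})) if and only if it is a zero of $\tilde{f}+G$, so it suffices to drive the iterates to such a zero.

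Next I would check the structural hypotheses required by the splitting. The operator $\tilde{f}=Pf$ is single-valued and monotone by Assumption 1, and since $f$ is $L_f$-Lipschitz and $P$ is bounded it is $\|P\|L_f$-Lipschitz; being single-valued, continuous, monotone and everywhere defined, it is maximal monotone. The operator $G=C^T\mathcal{B}C$ is maximal monotone as noted before Algorithm 1: $\mathcal{B}=(\mathcal{F}^{-1}+D)^{-1}$ is maximal monotone because $\mathcal{F}$ is (Assumption 2) and $D$ is monotone, and the composition with the bounded linear map $C$ preserves maximality under the relative-interior qualification of Assumption 3 (see \cite{Pennanen}). Consequently $J_{\gamma G}$ is single-valued with full domain, so every iterate of Algorithm 1 is well defined, the inner resolvent $J_{\gamma\mathcal{B}}$ being evaluated through Lemma \ref{comp1}.

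With these facts, Algorithm 1 is exactly the Tseng forward-backward-forward scheme for the inclusion $0\in\tilde{f}(x)+G(x)$, with single-valued forward operator $\tilde{f}$ and backward operator $G$. I would then invoke the convergence theorem of \cite{Tseng}: provided the step size satisfies $\gamma\in\bigl(0,1/(\|P\|L_f)\bigr)$ and the zero set of $\tilde{f}+G$ is nonempty, the sequence $(x_n)$ converges weakly to a zero $x^*$ of $\tilde{f}+G$, which by the first paragraph is an equilibrium of $({\mathcal L})$. The main obstacle I anticipate is not the mechanical application of Tseng's result but the justification of the maximal monotonicity of the composition $G=C^T\mathcal{B}C$, namely that the qualification condition in Assumption 3 genuinely licenses both the sum rule for $\mathcal{F}^{-1}+D$ and the pre/post-composition by the linear map $C$, together with the guarantee that the solution set is nonempty so that a weak limit point indeed exists; the passivity normalization and the identification with Tseng's iteration are then routine.
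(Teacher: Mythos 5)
Your proposal follows essentially the same route as the paper's own proof: multiply the inclusion (\ref{main}) by the invertible operator $P$, use the passivity identity $PB=C^T$ to rewrite it as $0\in \tilde{f}(x^*)+G(x^*)$ with $\tilde{f}=Pf$ monotone Lipschitz and $G=C^T(\mathcal{F}^{-1}+D)^{-1}C$ maximal monotone (via Assumption 3 and \cite{Pennanen}), and then invoke Tseng's convergence theorem \cite{Tseng}. If anything, you are slightly more careful than the paper, which leaves implicit both the step-size restriction $\gamma<1/(\Vert P\Vert L_f)$ and the nonemptiness of the solution set required by Tseng's result.
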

  \begin{proof}
We have the chain of equivalent relations
  \baqn
&&0 \in f(x^*)+B(\mathcal{F}^{-1}+D)^{-1}(Cx^*) \Leftrightarrow 0 \in Pf(x^*)+PB(\mathcal{F}^{-1}+D)^{-1}(Cx^*)\\
&\Leftrightarrow& 0 \in Pf(x^*)+C^T(\mathcal{F}^{-1}+D)^{-1}(Cx^*)\\
&\Leftrightarrow& 0 \in {{\tilde{f}(x^*)+G(x^*),}}
\eaqn
where $\tilde{f}:= Pf$ and $G:={C}^T(\mathcal{F}^{-1}+D)^{-1} {C}$. Note that $Pf$ is monotone, Lipschitz continuous and $G$ is a maximal monotone operator. The conclusion follows (see, e.g.,  \cite[Theorem 3.6]{Shehu} or \cite[Theorem 3.4]{Tseng}). 
 \end{proof}
   \begin{remark}
 One can use Algorithm 1 to obtain the numerical trajectory of the Lur'e system $(\mathcal{L})$ by using the fully implicit scheme (\ref{discf}) or semi-implicit scheme (\ref{discs}). On the other hand, the originality of Algorithm 1 is based on the semi-implicit scheme (\ref{discs}).
   \end{remark}
   Next, we show that the obtained result can be extended to general passive systems. \\
   
   \noindent$\mathbf{Assumption \;1'}:$  The system $(f,B,C,D)$ is $P$-passive, where $ f: H_1 \to H_1$ is $L_f$-Lipschitz continuous,   $B:{H_2\to H_1}, C: {H_1\to H_2}, D: {H_2\to H_2}$ are linear bounded operators, and $D$ is semi-coercive.  \\
   
   \noindent$\mathbf{Assumption \;4}:$  There exists $y_0\in H_2$ such that   if  $Cx\in {\rm rge}({\mathcal{F}}^{-1}+D)$, then  $ (y_0+{\rm rge}(D+D^T) )\cap (\mathcal{F}^{-1}+D)^{-1}Cx\neq \emptyset$. 
   \begin{remark}
 1) In finite dimensional spaces, the semi-coercivity of $D$ holds automatically since $D$ is positive semidefinite, which comes from the passivity of $(f,B,C,D)$.\\
 2) Assumption \;4 is an improvement of the usual assumption with $y_0=0$ used in the literature (see, e.g., \cite{L1,L2,tbc}).
\end{remark}
 \begin{lemma} \label{kersub}
  Let Assumption 1$^\prime$ hold. Then, $\ker(D+D^T)\subset \ker(PB-C^T)$.
   \end{lemma}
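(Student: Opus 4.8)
The plan is to read the conclusion off directly from the $P$-passivity inequality, exploiting the fact that in Definition~\ref{passi} the four points $x_1,x_2\in H_1$ and $y_1,y_2\in H_2$ may be chosen independently. That inequality couples three contributions: a nonlinear term in $f$, the cross term built from $PB-C^T$, and the quadratic term in $D$. My strategy is to select $y_1,y_2$ so that the $D$-term vanishes, and then to isolate the cross term by a scaling argument in the $x$-variable that annihilates the nonlinear $f$-contribution in the limit.

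First I would fix $y\in\ker(D+D^T)$ and set $y_1=y$, $y_2=0$. Then $\langle D(y_1-y_2),y_1-y_2\rangle=\langle Dy,y\rangle=\tfrac12\langle(D+D^T)y,y\rangle=0$, so the last term of the passivity inequality drops out, leaving $\langle P(f(x_1)-f(x_2)),x_1-x_2\rangle+\langle(PB-C^T)y,x_1-x_2\rangle\ge 0$ for all $x_1,x_2\in H_1$.

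Next, for an arbitrary direction $v\in H_1$ and scalar $t>0$, I would take $x_1=tv$ and $x_2=0$. Using the $L_f$-Lipschitz continuity of $f$ together with the boundedness of $P$, the nonlinear term is controlled by $|\langle P(f(tv)-f(0)),tv\rangle|\le \|P\|\,L_f\,t^2\|v\|^2$. Hence $t\,\langle(PB-C^T)y,v\rangle\ge -\|P\|\,L_f\,t^2\|v\|^2$; dividing by $t>0$ and letting $t\to 0^+$ gives $\langle(PB-C^T)y,v\rangle\ge 0$. Replacing $v$ by $-v$ yields the reverse inequality, so $\langle(PB-C^T)y,v\rangle=0$ for every $v\in H_1$, i.e. $(PB-C^T)y=0$ and $y\in\ker(PB-C^T)$, which is the claim.

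The only delicate point is the presence of the nonlinear map $f$: passivity controls $f(x_1)-f(x_2)$ only through the state difference and not linearly, so one cannot simply substitute a single $x$. The scaling $x_1=tv\to 0$ is exactly what tames this term, since the $f$-contribution is quadratic in $t$ while the cross term is linear, so the former is negligible after dividing by $t$. It is worth noting that this argument uses only the $P$-passivity and the Lipschitz bound; the semi-coercivity of $D$ assumed in Assumption~$1'$ plays no role in this lemma.
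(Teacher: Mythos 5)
Your proof is correct and follows essentially the same route as the paper: both set $y_1=y$, $y_2=0$ to kill the $D$-term and then exploit that the $f$-contribution scales quadratically in the state difference while the cross term scales linearly, so the latter dominates as the scaling goes to zero. The paper merely specializes your direction $v$ to $-(PB-C^T)y$ with $t=1/n$ and phrases the conclusion as a contradiction, whereas you test all directions $\pm v$ directly; your closing observation that the semi-coercivity of $D$ is not needed here is also consistent with the paper's argument.
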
 
   \begin{proof}
   Let $y\in \ker(D+D^T)$. We have 
   $$
   \langle Dy,y\rangle=0.
   $$
   Suppose that $y\notin \ker(PB-C^T)$. Then, $y^*:=(PB-C^T)y\neq 0$.   In (\ref{passi}), we take $y_1=y, y_2=0$, and $x_1-x_2=-y^*/n$. Note that 
   $$
   \langle P(f(x_1)-f(x_2)), x_1-x_2 \rangle \le \Vert P \Vert L_f \Vert x_1-x_2 \Vert^2=\frac{L_f \Vert P \Vert \Vert y^* \Vert^2 }{n^2},
   $$
   where $L_f$ is the Lipschitz constant of $f$. 
   
   Thus, we have 
   $$
   \frac{L_f \Vert P \Vert \Vert y^* \Vert^2 }{n^2}-\frac{ \Vert y^* \Vert^2}{n}\ge 0.
   $$
   Letting $n\to +\infty$, we obtain a contradiction. Hence,  $y\in \ker(PB-C^T)$ and the conclusion follows. 
   \end{proof}
    \begin{lemma} \label{kernel}
    Suppose that $D$ is semi-coercive. If $y\in H_2$ satisfies $\langle Dy,y \rangle=0$, then $y\in \ker(D+D^T)$.
       \end{lemma}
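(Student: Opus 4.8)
The plan is to reduce the statement to a spectral property of the symmetric operator $S:=D+D^T$. First I would record the elementary identity $\langle Dy,y\rangle=\tfrac12\langle Sy,y\rangle$, valid in a real Hilbert space because $\langle D^Ty,y\rangle=\langle y,Dy\rangle=\langle Dy,y\rangle$. Thus the hypothesis $\langle Dy,y\rangle=0$ is exactly $\langle Sy,y\rangle=0$, and the conclusion $y\in\ker(D+D^T)=\ker S$ amounts to showing $Sy=0$. In other words, the whole lemma is the assertion that, for this particular $S$, vanishing of the associated quadratic form at $y$ forces $Sy=0$.

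Next I would exploit that $S$ is bounded and self-adjoint, so $\ker S=({\rm rge}\,S)^\perp$ and hence $H_2=\ker S\oplus\overline{{\rm rge}\,S}$. Writing $y=y_0+y_1$ with $y_0\in\ker S$ and $y_1\in\overline{{\rm rge}\,S}$, and using $Sy_0=0$ together with $\langle Sy_1,y_0\rangle=\langle y_1,Sy_0\rangle=0$, the quadratic form collapses to $\langle Sy,y\rangle=\langle Sy_1,y_1\rangle$. Therefore $\langle Dy,y\rangle=0$ is equivalent to $\langle Dy_1,y_1\rangle=0$, and it remains only to prove $y_1=0$.

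The final step is where semi-coercivity enters, and it is also the only genuinely infinite-dimensional obstacle. Semi-coercivity supplies $\langle Dx,x\rangle\ge c\Vert x\Vert^2$ only for $x\in{\rm rge}(D+D^T)={\rm rge}\,S$, whereas our $y_1$ lies a priori in the closure $\overline{{\rm rge}\,S}$. The hard part is bridging this gap: I would approximate $y_1$ by a sequence $z_n\in{\rm rge}\,S$ with $z_n\to y_1$, apply the bound $\langle Dz_n,z_n\rangle\ge c\Vert z_n\Vert^2$, and pass to the limit. Since $D$ is bounded, the map $x\mapsto\langle Dx,x\rangle$ is norm-continuous, so the inequality survives in the limit and yields $\langle Dy_1,y_1\rangle\ge c\Vert y_1\Vert^2$; combined with $\langle Dy_1,y_1\rangle=0$ this forces $y_1=0$, whence $y=y_0\in\ker S=\ker(D+D^T)$. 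In finite dimensions ${\rm rge}\,S$ is already closed, so this limiting step is unnecessary, which matches the earlier remark that semi-coercivity is automatic there. As an alternative route avoiding the decomposition, one could observe that semi-coercivity makes $S$ positive semidefinite, take its positive square root $S^{1/2}$, and use $\langle Sy,y\rangle=\Vert S^{1/2}y\Vert^2=0\Rightarrow S^{1/2}y=0\Rightarrow Sy=0$; I nonetheless expect the decomposition argument to be the cleaner one to write out in full, since it invokes the semi-coercivity hypothesis directly.
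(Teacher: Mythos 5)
Your proof is correct and follows essentially the same route as the paper's: both decompose $y$ orthogonally into its components in $\ker(D+D^T)$ and (the closure of) ${\rm rge}(D+D^T)$, observe that the quadratic form collapses onto the range component, and invoke semi-coercivity to force that component to vanish. The only difference is that you treat the infinite-dimensional subtlety explicitly, projecting onto $\overline{{\rm rge}(D+D^T)}$ and extending the coercivity bound to it by density and continuity of $x\mapsto\langle Dx,x\rangle$, whereas the paper projects directly onto ${\rm rge}(D+D^T)$ and leaves that point implicit.
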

  \begin{proof}
  Let $y=y_1+y_2$, where $y_1, y_2$ are the projections of $y$ onto $\ker(D+D^T)$ and ${\rm rge}(D+D^T)$ respectively. We have 
  $$
  \langle (D+D^T) y, y \rangle= 2 \langle D y, y \rangle = 0,
  $$
  which implies that 
  $$
   \langle (D+D^T) y_2, y_2 \rangle=  0. 
  $$
  Since $D$ is semi-coercive, we infer that $y_2=0$ and hence $y=y_1\in \ker(D+D^T)$.
  \end{proof}      
 \begin{lemma} \label{glip}
  Let Assumptions 1', 2, 3 hold. {{Then, $g$ defined by $g(x):=Pf(x)+(PB-C^T)(\mathcal{F}^{-1}+D)^{-1}Cx$ is single-valued}}. In addition, if   Assumption 4 is satisfied, then  $g$ is Lipschitz continuous with the Lipschitz constant 
  $$
  L_g=\Vert P \Vert  L_f+\frac{\Vert C \Vert \Vert PB-C^T \Vert}{c}.
  $$
   \end{lemma}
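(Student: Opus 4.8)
The plan is to treat the two assertions separately, since single-valuedness already holds under Assumptions 1$'$, 2, 3, while the Lipschitz estimate additionally exploits Assumption 4.

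For single-valuedness, I would fix $x$ and take any two selections $\lambda_1,\lambda_2\in(\mathcal{F}^{-1}+D)^{-1}Cx$, with the goal of showing $(PB-C^T)\lambda_1=(PB-C^T)\lambda_2$. Rewriting the membership $Cx\in(\mathcal{F}^{-1}+D)(\lambda_i)=\mathcal{F}^{-1}(\lambda_i)+D\lambda_i$ gives $\lambda_i\in\mathcal{F}(Cx-D\lambda_i)$, so monotonicity of $\mathcal{F}$ applied to the pairs $(Cx-D\lambda_i,\lambda_i)$ yields $\langle\lambda_1-\lambda_2,-D(\lambda_1-\lambda_2)\rangle\ge 0$, i.e. $\langle D(\lambda_1-\lambda_2),\lambda_1-\lambda_2\rangle\le 0$. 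Since $(f,B,C,D)$ is $P$-passive, $D$ is monotone (see the Remark after Definition \ref{passi}), so in fact $\langle D(\lambda_1-\lambda_2),\lambda_1-\lambda_2\rangle=0$. Then Lemma \ref{kernel} (semi-coercivity of $D$) places $\lambda_1-\lambda_2\in\ker(D+D^T)$, and Lemma \ref{kersub} gives $\ker(D+D^T)\subset\ker(PB-C^T)$, whence $(PB-C^T)(\lambda_1-\lambda_2)=0$. This shows the value $g(x)$ is independent of the chosen selection, so $g$ is single-valued.

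For the Lipschitz bound, the key idea is that although $(PB-C^T)\lambda(x)$ is now well-defined, to control $\Vert\lambda(x)-\lambda(x')\Vert$ I must choose the selections cleverly. Here Assumption 4 lets me pick, for each admissible $x$, a selection $\lambda(x)\in(y_0+{\rm rge}(D+D^T))\cap(\mathcal{F}^{-1}+D)^{-1}Cx$; then any difference $\lambda(x_1)-\lambda(x_2)$ lies in ${\rm rge}(D+D^T)$, which is precisely the subspace on which semi-coercivity provides a quadratic lower bound. Writing $\lambda_i:=\lambda(x_i)$ and $u_i:=Cx_i-D\lambda_i$ with $\lambda_i\in\mathcal{F}(u_i)$, monotonicity of $\mathcal{F}$ gives $\langle\lambda_1-\lambda_2,C(x_1-x_2)\rangle\ge\langle D(\lambda_1-\lambda_2),\lambda_1-\lambda_2\rangle\ge c\Vert\lambda_1-\lambda_2\Vert^2$, the last step being semi-coercivity. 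Cauchy--Schwarz on the left then yields $\Vert\lambda_1-\lambda_2\Vert\le(\Vert C\Vert/c)\Vert x_1-x_2\Vert$.

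Finally I would combine the pieces by the triangle inequality: $\Vert g(x_1)-g(x_2)\Vert\le\Vert P\Vert\,\Vert f(x_1)-f(x_2)\Vert+\Vert PB-C^T\Vert\,\Vert\lambda_1-\lambda_2\Vert\le\big(\Vert P\Vert L_f+\Vert C\Vert\Vert PB-C^T\Vert/c\big)\Vert x_1-x_2\Vert$, recovering the stated constant $L_g$. I expect the main obstacle to be the middle step of the estimate: semi-coercivity is available only on ${\rm rge}(D+D^T)$, so the whole argument hinges on arranging $\lambda(x_1)-\lambda(x_2)$ to live in that range. This is exactly what Assumption 4 buys, and it explains why the refined selection is needed for the Lipschitz estimate but not for the single-valuedness claim.
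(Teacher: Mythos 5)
Your proposal is correct and follows essentially the same route as the paper: for single-valuedness you use monotonicity of $\mathcal{F}$ plus monotonicity of $D$ to get $\langle D(\lambda_1-\lambda_2),\lambda_1-\lambda_2\rangle=0$, then Lemma \ref{kernel} and Lemma \ref{kersub} to conclude $(PB-C^T)(\lambda_1-\lambda_2)=0$; for the Lipschitz bound you use Assumption 4 to select representatives whose differences lie in ${\rm rge}(D+D^T)$, where semi-coercivity yields $c\Vert\lambda_1-\lambda_2\Vert^2\le\langle\lambda_1-\lambda_2,C(x_1-x_2)\rangle$, exactly as in the paper's chain of estimates. Your closing observation about why Assumption 4 is needed only for the Lipschitz estimate is also consistent with how the paper structures the proof.
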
    
  \begin{proof}
First we prove that $(PB-C^T)(\mathcal{F}^{-1}+D)^{-1}Cx$ is {{a singleton for every $x\in H_1$. For}} $y_1, y_2 \in (\mathcal{F}^{-1}+D)^{-1}Cx$, we have $y_i\in \mathcal{F}(Cx-Dy_i)$. The monotonicity of $F$ deduces that 
$$\langle y_1-y_2, D(y_1-y_2) \rangle \le 0.$$
 Since $D$ is monotone, we have $\langle y_1-y_2, D(y_1-y_2) \rangle = 0$ and thus $y_1-y_2\in \ker(D+D^T)\subset \ker(PB-C^T)$ (Lemma \ref{kernel}).
 Therefore, $(PB-C^T)(y_1-y_2)=0$ which implies that $(PB-C^T)(\mathcal{F}^{-1}+D)^{-1}Cx$ is {{a singleton}}.

Now suppose that Assumption 4 holds. Let $x_1, x_2 \in {\rm dom}((\mathcal{F}^{-1}+D)^{-1}C)$ or equivalently $Cx_1, Cx_2 \in {\rm rge}({\mathcal{F}}^{-1}+D)$. Then, we can find $y_1, y_2 \in {\rm rge}(D+D^T)$ such that $y_i+y_0 \in (\mathcal{F}^{-1}+D)^{-1}Cx_i,\; i=1, 2$ {{($y_0$ is given in Assumption 4)}}. Similarly as above, we have 
$$
\langle y_1-y_2, C(x_1-x_2) -D(y_1-y_2) \rangle \ge 0.
$$
Since $D$ is semi-coercive, there exists $c>0$ such that 
\baq \nonumber
c\Vert y_1-y_2\Vert^2 &\le& \langle y_1-y_2, D(y_1-y_2) \rangle \le \langle y_1-y_2, C(x_1-x_2) \rangle \\
&\le& \Vert C \Vert \Vert x_1-x_2 \Vert \Vert y_1-y_2 \Vert, \label{semip}
\eaq
which implies that 
$$
\Vert y_1-y_2\Vert \le \frac{\Vert C \Vert}{c} \Vert x_1-x_2 \Vert .
$$
Thus,
\baqn
\Vert g(x_1)-g(x_2)\Vert &=& \Vert Pf(x_1)-Pf(x_2)+(PB-C^T) (y_1+y_0-y_2-y_0)\Vert \\
&\le & (\Vert P \Vert  L_f+\frac{\Vert C \Vert \Vert PB-C^T \Vert}{c}) \Vert x_1-x_2 \Vert
\eaqn
and the conclusion follows. 
 \end{proof}  
 
 \begin{lemma} 
Let Assumptions 1', 2, 3, 4 hold and
\beq
\mathcal{H}:=Pf+PB(\mathcal{F}^{-1}+D)^{-1}C.
\eeq
Then, $\mathcal{H}$ is a maximal monotone operator. 
   \end{lemma}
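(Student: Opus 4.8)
The plan is to show first that $\mathcal{H}$ is monotone by a direct estimate, and then to upgrade this to maximal monotonicity via Minty's theorem (Proposition \ref{minty}) together with a Banach fixed-point argument. The structural observation I would exploit is the splitting
$$\mathcal{H} = g + G, \qquad g(x) = Pf(x) + (PB-C^T)(\mathcal{F}^{-1}+D)^{-1}Cx, \qquad G = C^T(\mathcal{F}^{-1}+D)^{-1}C,$$
which follows by writing $PB = (PB-C^T) + C^T$. By Lemma \ref{glip} the term $(PB-C^T)(\mathcal{F}^{-1}+D)^{-1}Cx$ is single-valued, so $g$ is a genuine single-valued, $L_g$-Lipschitz map on $H_1$, while $G$ is maximal monotone (Assumptions 2, 3 and monotonicity of $D$, as noted after Algorithm 1).

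For monotonicity I would take arbitrary $\xi_i \in \mathcal{H}(x_i)$, written as $\xi_i = Pf(x_i) + PB w_i$ with $w_i \in (\mathcal{F}^{-1}+D)^{-1}Cx_i$, equivalently $w_i \in \mathcal{F}(Cx_i - Dw_i)$. Splitting $PB = (PB-C^T)+C^T$ and applying the $P$-passivity inequality (Definition \ref{passi}) with $y_i = w_i$ bounds $\langle P(f(x_1)-f(x_2)),x_1-x_2\rangle + \langle (PB-C^T)(w_1-w_2),x_1-x_2\rangle$ from below by $-\langle D(w_1-w_2),w_1-w_2\rangle$; meanwhile the monotonicity of $\mathcal{F}$ applied to $w_i \in \mathcal{F}(Cx_i-Dw_i)$ gives $\langle C^T(w_1-w_2),x_1-x_2\rangle \ge \langle D(w_1-w_2),w_1-w_2\rangle$. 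Adding the two estimates cancels the $\langle D(w_1-w_2),w_1-w_2\rangle$ terms, so $\langle \xi_1-\xi_2, x_1-x_2\rangle \ge 0$ and $\mathcal{H}$ is monotone. This step uses neither semi-coercivity nor Assumption 4; those enter only through Lemma \ref{glip}.

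For maximality, by Proposition \ref{minty} it suffices to verify $\mathrm{rge}(\mathcal{H}+\lambda I) = H_1$ for one $\lambda>0$. Fixing $b\in H_1$, the inclusion $b \in \mathcal{H}(x)+\lambda x = g(x)+G(x)+\lambda x$ is equivalent to the fixed-point equation $x = (G+\lambda I)^{-1}(b-g(x)) =: T_\lambda(x)$. Since $G$ is maximal monotone, $G+\lambda I$ is strongly monotone, so $(G+\lambda I)^{-1}$ is single-valued, has full domain $H_1$ (Minty), and is $\frac{1}{\lambda}$-Lipschitz; hence $T_\lambda$ is $\frac{L_g}{\lambda}$-Lipschitz. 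Choosing any $\lambda > L_g$ makes $T_\lambda$ a contraction, and the Banach fixed-point theorem yields $x^*$ with $b \in \mathcal{H}(x^*)+\lambda x^*$. Thus $\mathrm{rge}(\mathcal{H}+\lambda I)=H_1$, and monotonicity plus Proposition \ref{minty} give that $\mathcal{H}$ is maximal monotone.

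I expect the monotonicity step to be the main obstacle: one must use the \emph{same} representatives $w_i$ in both the passivity inequality and the monotonicity of $\mathcal{F}$ so that the indefinite cross terms $\langle D(w_1-w_2),w_1-w_2\rangle$ cancel exactly rather than merely being bounded. A secondary point to justify is the legitimacy of the set-valued splitting $\mathcal{H}=g+G$, namely that each selection of $\mathcal{H}(x)$ decomposes as $g(x)$ plus a selection of $G(x)$ and conversely, which rests precisely on the single-valuedness of $(PB-C^T)(\mathcal{F}^{-1}+D)^{-1}C$ from Lemma \ref{glip}. Alternatively, once the Lipschitz continuity of $g$ and the monotonicity of $\mathcal{H}$ are in hand, one could cite the classical result that a maximal monotone operator perturbed by a Lipschitz map with monotone sum stays maximal monotone, but I would favor the self-contained Minty argument above.
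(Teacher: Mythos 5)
Your proposal is correct, and it tracks the paper's proof closely in structure: the same splitting $\mathcal{H}=g+G$ justified by Lemma \ref{glip}, the identical monotonicity computation (passivity applied with $y_i=w_i$ plus monotonicity of $\mathcal{F}$ at $w_i\in\mathcal{F}(Cx_i-Dw_i)$, so that the $\langle D(w_1-w_2),w_1-w_2\rangle$ terms cancel), and the same use of Proposition \ref{minty} to pass from monotonicity to maximality. The one genuine difference is how surjectivity of $\mathcal{H}+\lambda I$ is obtained. The paper takes $\lambda=2L_g$, observes that $g+L_gI$ is monotone and Lipschitz, cites \cite[Lemma 2.4]{Brezis} to conclude that $G+(g+L_gI)$ is maximal monotone, and then applies Minty to that operator; you instead prove $\mathrm{rge}(\mathcal{H}+\lambda I)=H_1$ for any $\lambda>L_g$ directly, by recasting $b\in g(x)+G(x)+\lambda x$ as the fixed-point equation $x=(G+\lambda I)^{-1}(b-g(x))$ and invoking Banach's contraction principle, using that $(G+\lambda I)^{-1}$ is single-valued, everywhere defined and $1/\lambda$-Lipschitz. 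Your route is self-contained --- it essentially reproves the Brezis lemma that the paper cites --- at the cost of a few extra lines, and your explicit justification of the set-valued identity $\mathcal{H}=g+G$ via the single-valuedness of $(PB-C^T)(\mathcal{F}^{-1}+D)^{-1}C$ addresses a point the paper passes over silently (without it, splitting a selection $PBw$ into $(PB-C^T)w'+C^Tw''$ with possibly different representatives would not recover $\mathcal{H}$). One technical caveat affects both arguments equally: $g$ is only defined on $\mathrm{dom}(G)=\{x:\ Cx\in\mathrm{rge}(\mathcal{F}^{-1}+D)\}$, which need not be closed or equal to $H_1$. Your map $T_\lambda$ does send $\mathrm{dom}(G)$ into itself, since $\mathrm{rge}\big((G+\lambda I)^{-1}\big)=\mathrm{dom}(G)$, but Banach's theorem requires a complete space, so strictly one should first extend $g$ Lipschitz-continuously to $\overline{\mathrm{dom}(G)}$ and then note that the fixed point lies in $\mathrm{rge}\big((G+\lambda I)^{-1}\big)\subseteq\mathrm{dom}(G)$; the analogous everywhere-definedness hypothesis is implicitly assumed when the paper invokes Brezis' lemma, so this is a shared (and easily repaired) gap rather than a defect of your approach.
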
 
 \begin{proof}
 First we prove that $\mathcal{H}$ is monotone. Let 
 $x_1, x_2 \in {\rm dom}(\mathcal{H})$ and
  $z_i \in \mathcal{H}(x_i), i=1, 2$. Then, we have 
 $z_i=Pf(x_i)+$ {{$PBy_i$}}, where  $y_i\in (\mathcal{F}^{-1}+D)^{-1}Cx_i$, {{i.e.,}} $y_i\in \mathcal{F}(Cx_i-Dy_i)$, and thus 
 $$
\langle y_1-y_2, C(x_1-x_2) -D(y_1-y_2) \rangle \ge 0.
$$
Using the $P$-passivity of $(f,B, C, D)$, we have
\baqn
\langle z_1-z_2, x_1-x_2 \rangle &=& \langle Pf(x_1)-Pf(x_2)+PB (y_1-y_2), x_1-x_2 \rangle\\
& \ge & \langle y_1-y_2, C(x_1-x_2) -D(y_1-y_2) \rangle \\
&\ge & 0.
\eaqn
Thus, $\mathcal{H}$ is monotone. 
 Note that $g(x):=Pf(x)+(PB-C^T)(\mathcal{F}^{-1}+D)^{-1}Cx$ is Lipschitz continuous (Lemma \ref{glip}). We can write $\mathcal{H}=Pf+(PB-C^T)(\mathcal{F}^{-1}+D)^{-1}C+C^T(\mathcal{F}^{-1}+D)^{-1}C=g+G$. We use Minty's theorem to prove that $\mathcal{H}$ is a maximal monotone operator with $\lambda = 2L_g$.  For each $y\in H_1$, we prove that there exists $x\in H_1$ such that 
 $$
 y\in \mathcal{H}(x)+\lambda x=G(x)+(g(x)+L_g x)+L_g x.
 $$
Since $G$ is maximal monotone and $g+L_g I$ is Lipschitz and monotone, we conclude that the operator $G+g+L_g I$ is maximal monotone  \cite[Lemma 2.4]{Brezis}. Then, the existence of $x$ is guaranteed by using Minty's theorem and the conclusion follows.  
 \end{proof}  
 
  Since $g$ may be nonmonotone, the convergence may not be obtained if we use the splitting algorithms. However, we can compute the resolvent  $J_{\gamma \mathcal{H}}$ for some small $\gamma >0$ (see, e.g., \cite{LT}). \\

   \noindent \textbf{Algorithm \;2}: For some small $\gamma >0 $, compute $J_{\gamma \mathcal{B}}$, $J_{\gamma G}$ and then $J_{\gamma \mathcal{H}}$. Given $x_0\in H$, for $n\ge 0$, we compute \\
 $$
 x_{n+1}=J_{\gamma \mathcal{H}}(x_n).
 $$

   \begin{theorem}
  Let Assumptions 1', 2, 3, 4 hold. Let $(x_n)$ be the sequence generated by  Algorithm 2. Then, $(x_n)$ converges weakly to an equilibrium point $x^*$ of the {{set-valued Lur'e  system}} $({\mathcal L})$.
 \end{theorem}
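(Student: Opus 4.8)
The plan is to recognize Algorithm 2 as the proximal point algorithm applied to the operator $\mathcal{H} = Pf + PB(\mathcal{F}^{-1}+D)^{-1}C$, which the preceding lemma has shown to be maximal monotone, and then to invoke the classical weak convergence theory for this iteration. The first step is to identify the fixed points. Since $P$ is symmetric and positive definite it is a bijection with bounded inverse, and $\mathcal{H}(x) = P\big(f(x) + B(\mathcal{F}^{-1}+D)^{-1}(Cx)\big)$; hence $0 \in \mathcal{H}(x^*)$ if and only if $0 \in f(x^*) + B(\mathcal{F}^{-1}+D)^{-1}(Cx^*)$, i.e. $x^*$ satisfies (\ref{main}). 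Thus $\operatorname{zer}(\mathcal{H})$ is exactly the equilibrium set of $(\mathcal{L})$, and it suffices to prove that $(x_n)$ converges weakly to a zero of $\mathcal{H}$.

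Next I would record the properties of the update map. Because $\mathcal{H}$ is maximal monotone, Minty's theorem (Proposition \ref{minty}) guarantees that $J_{\gamma\mathcal{H}} = (I + \gamma\mathcal{H})^{-1}$ is single-valued with full domain for every $\gamma > 0$; moreover it is firmly nonexpansive, and its fixed-point set coincides with $\operatorname{zer}(\mathcal{H})$. The restriction to small $\gamma$ in the statement of the algorithm is only needed so that $J_{\gamma\mathcal{H}}$ can actually be computed through the contraction $x \mapsto J_{\gamma G}(x_n - \gamma g(x))$, which is $\gamma L_g$-Lipschitz and hence a contraction for $\gamma < 1/L_g$ by Lemma \ref{glip}; it plays no role in the convergence argument itself. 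Writing $x_{n+1} = J_{\gamma\mathcal{H}}(x_n)$ as the orbit of a firmly nonexpansive operator, I would then apply the standard Krasnosel'skii--Mann / Opial argument \cite{Bauschke} (equivalently, Rockafellar's convergence theorem for the proximal point algorithm): for any fixed point $\bar x$ the sequence $\|x_n - \bar x\|$ is nonincreasing, so $(x_n)$ is bounded and Fej\'er monotone, the residuals $x_{n+1} - x_n \to 0$, and every weak cluster point is a fixed point of $J_{\gamma\mathcal{H}}$; Opial's lemma then upgrades this to weak convergence of the whole sequence to a single $x^* \in \operatorname{zer}(\mathcal{H})$. By the first step this $x^*$ is an equilibrium of $(\mathcal{L})$.

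The main obstacle is the nonemptiness of $\operatorname{zer}(\mathcal{H})$: the proximal point iteration converges to an equilibrium only when one exists (otherwise $\|x_n\| \to +\infty$), so the statement tacitly requires the equilibrium set to be nonempty, and I would either add this as a standing hypothesis or derive it from a coercivity/surjectivity property of $\mathcal{H}$. A secondary technical point, already handled by the previous lemmas, is that $\mathcal{H}$ genuinely is maximal monotone despite $g$ being only Lipschitz and possibly nonmonotone — this is precisely what makes the decomposition $\mathcal{H} = g + G$ usable and lets the resolvents chain $J_{\gamma\mathcal{B}} \to J_{\gamma G} \to J_{\gamma\mathcal{H}}$ of Lemma \ref{comp1} furnish the implementable iteration.
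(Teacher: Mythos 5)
Your proof is correct and takes essentially the same route as the paper, whose entire argument is a one-line appeal to Rockafellar's proximal point theorem applied to the maximal monotone operator $\mathcal{H}$. The details you supply --- identifying $\operatorname{zer}(\mathcal{H})$ with the equilibrium set via the injectivity of the positive definite $P$, and flagging that weak convergence tacitly requires $\operatorname{zer}(\mathcal{H})\neq\emptyset$ --- are left implicit in the paper, and your nonemptiness caveat is a legitimate observation about a hypothesis the paper glosses over.
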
 
\begin{proof}
Since $\mathcal{H}$ is a maximal monotone operator, using the proximal point algorithm we obtain the conclusion (see, e.g., \cite{Rockafellar}). 
\end{proof}

\section{Applications to quasi-variational inequalities}\label{sec4}
 Let us consider a particular case of   (\ref{main}) when $B=C=I$:
  \beq\label{reeg}
 0 \in f(x^*)+(\mathcal{F}^{-1}+D)^{-1}(x^*).
 \eeq
 In addition, if  $\mathcal{F}=N_{\Omega}$, where $\Omega$ is a closed convex set, then (\ref{reeg})  becomes 
 \beq\label{reek}
 0 \in f(x^*)+(N_\Omega^{-1}+D)^{-1}(x^*).
 \eeq
 We show that (\ref{reek}) is indeed a quasi-variational inequality (QVI). 
\begin{proposition} \label{equi}The monotone inclusion (\ref{reek}) is equivalent to the QVI:
  \beq\label{quasi1}
 0 \in   f(x^*)+N_{K(x^*)}(x^*),
\eeq 
where $K(x):=\Omega -Df(x)$.
\end{proposition}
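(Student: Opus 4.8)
The plan is to derive \eqref{quasi1} from \eqref{reek} through a direct chain of equivalences, first unravelling the inverse operator $(N_\Omega^{-1}+D)^{-1}$ and then invoking the translation identity for normal cones. Since every manipulation will be an equivalence, proving one implication automatically yields the other.

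First I would rewrite \eqref{reek} as $-f(x^*)\in(N_\Omega^{-1}+D)^{-1}(x^*)$ and apply the definition of the inverse of a set-valued map: this is equivalent to $x^*\in(N_\Omega^{-1}+D)(-f(x^*))$, that is $x^*\in N_\Omega^{-1}(-f(x^*))+D(-f(x^*))$, where I use that $D$ is single-valued and linear. Moving the single-valued term $D(-f(x^*))=-Df(x^*)$ to the left gives $x^*+Df(x^*)\in N_\Omega^{-1}(-f(x^*))$, and applying the definition of the inverse once more, now to $N_\Omega$, yields $-f(x^*)\in N_\Omega\bpa{x^*+Df(x^*)}$.

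The second ingredient is the translation property of the normal cone: for any vector $a$ and any $z$ with $z+a\in\Omega$ one has $N_\Omega(z+a)=N_{\Omega-a}(z)$. This follows directly from the defining inequality of $N_K$, since writing a generic element of $K(x^*)=\Omega-Df(x^*)$ as $\omega-Df(x^*)$ with $\omega\in\Omega$ turns the test vector $y-z$ into $\omega-\bpa{z+Df(x^*)}$, so the two cones coincide. Taking $a=Df(x^*)$ and $z=x^*$ converts $-f(x^*)\in N_\Omega\bpa{x^*+Df(x^*)}$ into $-f(x^*)\in N_{K(x^*)}(x^*)$, which is exactly \eqref{quasi1}, and reversing the chain gives the converse.

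I expect the only delicate point to be bookkeeping rather than any real depth: one must track the feasibility/domain conditions so the objects are nonempty where claimed. Concretely, $N_\Omega(u)$ is empty for $u\notin\Omega$ and $N_{K(x^*)}(x^*)$ presupposes $x^*\in K(x^*)$; both amount to the single condition $x^*+Df(x^*)\in\Omega$, and because this constraint appears symmetrically on the two sides of the chain, no additional hypothesis on $\Omega$, $f$, or $D$ is needed. I would state this feasibility equivalence explicitly so that the equivalence of the two problems is airtight.
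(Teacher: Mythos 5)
Your proof is correct and follows essentially the same route as the paper: both unravel $(N_\Omega^{-1}+D)^{-1}$ via the definition of the inverse (the paper does this through an auxiliary $\lambda=-f(x^*)$, you substitute it directly) to reach $-f(x^*)\in N_\Omega(x^*+Df(x^*))$, and then identify this with $N_{K(x^*)}(x^*)$ by the translation property of the normal cone. Your explicit verification of that translation identity and of the feasibility condition $x^*+Df(x^*)\in\Omega$ is a welcome bit of rigor that the paper leaves implicit, but it does not change the argument.
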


\begin{proof}
{{We have the following equivalences}}
$0 \in f(x^*)+(N_\Omega^{-1}+D)^{-1}(x^*) \Leftrightarrow 0 =  f(x^*)+\lambda$ {{with}} $\lambda \in {{(N_\Omega^{-1}}}+D)^{-1}(x^*)\Leftrightarrow \lambda \in N_\Omega(x^*-D\lambda)$. From $\lambda=-  f(x^*)$, we obtain 
$$
 0 \in  f(x^*)+N_{\Omega}(x^*+Df(x^*))= f(x^*)+N_{K(x^*)}(x^*)
$$
and the conclusion follows. 
\end{proof}
\begin{remark}
(i) Similarly, the monotone inclusion (\ref{reeg}) is equivalent to
  \beq\label{reeg1}
 0 \in f(x^*)+\mathcal{F}(x^*+Df(x^*).
 \eeq
(ii) The mapping $f$ is not required to be $\mu$-strongly monotone and  the moving set $K(x)$ in the QVI (\ref{quasi1}) is not necessarily   $l$-contractive with $l<\mu/L$ as in \cite{al1,Nesterov}.  Indeed, to solve the QVI (\ref{quasi1}), we can solve it equivalent form, the monotone inclusion (\ref{reek}), by computing the resolvent of the operator $\mathcal{C}:=(N_\Omega^{-1}+D)^{-1}$. The following is a consequence of Lemma \ref{comp1}.
\end{remark}
 \begin{corollary}\label{comp2}
Let be given $x\in H$ and $\gamma>0$. Then, $y=J_{\gamma \mathcal{C}}(x)$ {{if and only if }}  $y=E(\gamma J_{N_\Omega}^E(Ex)+Dx)$, where $E:=(\gamma I+D)^{-1}$ is a positive definite linear bounded operator. 
\end{corollary}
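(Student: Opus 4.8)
The plan is to obtain this statement as a direct specialization of Lemma \ref{comp1}. First I would observe that the operator $\mathcal{C} = (N_\Omega^{-1}+D)^{-1}$ is precisely the operator $\mathcal{B} = (\mathcal{F}^{-1}+D)^{-1}$ appearing in Lemma \ref{comp1}, for the particular choice $\mathcal{F} := N_\Omega$. Consequently, once the hypotheses of that lemma are checked for this choice, the asserted equivalence $y = J_{\gamma\mathcal{C}}(x) \Leftrightarrow y = E(\gamma J_{N_\Omega}^E(Ex) + Dx)$ follows verbatim, with the abstract resolvent $J_\mathcal{F}^E$ instantiated as $J_{N_\Omega}^E$ and $E := (\gamma I + D)^{-1}$ as before.

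Next I would verify that $\mathcal{F} = N_\Omega$ meets the two standing requirements used in the proof of Lemma \ref{comp1}. The first is maximal monotonicity of $\mathcal{F}$, needed so that $J_{N_\Omega}^E = (E + N_\Omega)^{-1}$ is single-valued with full domain; this holds because $\Omega$ is a nonempty closed convex set, so $N_\Omega$ is maximal monotone (as recalled in Section \ref{sec2}), and then the proposition on $J_\mathcal{F}^E$ applies. The second is monotonicity of $D$, which is exactly what guarantees that $E = (\gamma I + D)^{-1}$ is a well-defined, single-valued, positive definite linear bounded operator, precisely as established at the start of the proof of Lemma \ref{comp1}. Since $D$ is monotone in the present setting, both requirements are satisfied.

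The only point deserving a word of care is confirming that the chain of equivalences in Lemma \ref{comp1} uses no property of $\mathcal{F}$ beyond its maximal monotonicity: in particular it never invokes single-valuedness or regularity of $\mathcal{F}$, since $\mathcal{F}$ enters only through membership relations of the form $\tfrac{x-y}{\gamma} \in \mathcal{F}(\cdot)$ and through the well-posedness of $J_\mathcal{F}^E$. Inspecting that chain, the substitution $\mathcal{F} = N_\Omega$ goes through unchanged. I do not expect any genuine obstacle here: the result is a corollary in the strict sense, and the whole argument reduces to identifying $\mathcal{C}$ with $\mathcal{B}$, noting that $N_\Omega$ is maximal monotone, and invoking Lemma \ref{comp1}.
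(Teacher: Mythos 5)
Your proposal is correct and matches the paper exactly: the paper offers no separate proof, stating only that the corollary ``is a consequence of Lemma \ref{comp1}'', i.e.\ precisely the specialization $\mathcal{F}=N_\Omega$ that you carry out. Your additional checks (maximal monotonicity of $N_\Omega$ for a nonempty closed convex $\Omega$, monotonicity of $D$ so that $E=(\gamma I+D)^{-1}$ is positive definite, and that the equivalence chain uses nothing more of $\mathcal{F}$) are exactly the implicit hypotheses the paper relies on, just made explicit.
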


Next we consider the case where $f$ is strongly monotone. First, we recall {{the following fact (see, e.g.,    \cite{IT}), which will be used below}}.

\begin{lemma}\label{inve}
If $f$ is $\mu$-strongly monotone and $L$-Lipschitz continuous, then $f^{-1}$ is $\frac{\mu}{L^2}$-strongly monotone and $\frac{1}{\mu}$-Lipschitz continuous.
\end{lemma}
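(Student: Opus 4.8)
The plan is to treat $f^{-1}$ as a genuine single-valued map and to read off both properties directly from the two defining inequalities for $f$. First I would record that $f^{-1}$ is well defined and single-valued on all of $H$. Strong monotonicity forces $f$ to be injective (if $f(x_1)=f(x_2)$ then $\mu\Vert x_1-x_2\Vert^2\le \langle f(x_1)-f(x_2),x_1-x_2\rangle=0$), while a monotone map that is Lipschitz continuous with full domain is maximal monotone; since $f$ is in fact strongly monotone, Minty's theorem (Proposition \ref{minty}) gives ${\rm rge}(f)=H$. Hence for every $u\in H$ there is a unique $x$ with $f(x)=u$, so I may fix arbitrary $u_1,u_2\in H$ and set $x_i:=f^{-1}(u_i)$, i.e. $u_i=f(x_i)$, for $i=1,2$.

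For the Lipschitz bound of $f^{-1}$ I would combine strong monotonicity with Cauchy--Schwarz. From
$$\mu\Vert x_1-x_2\Vert^2\le \langle f(x_1)-f(x_2),\,x_1-x_2\rangle=\langle u_1-u_2,\,x_1-x_2\rangle\le \Vert u_1-u_2\Vert\,\Vert x_1-x_2\Vert,$$
dividing by $\Vert x_1-x_2\Vert$ yields $\Vert x_1-x_2\Vert\le \tfrac{1}{\mu}\Vert u_1-u_2\Vert$, which is exactly $\tfrac{1}{\mu}$-Lipschitz continuity of $f^{-1}$.

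For the strong-monotonicity bound I would instead use the $L$-Lipschitz continuity of $f$ to control $\Vert x_1-x_2\Vert$ from below: $\Vert u_1-u_2\Vert=\Vert f(x_1)-f(x_2)\Vert\le L\Vert x_1-x_2\Vert$, so $\Vert x_1-x_2\Vert\ge \tfrac{1}{L}\Vert u_1-u_2\Vert$. Feeding this into strong monotonicity of $f$ gives
$$\langle u_1-u_2,\,f^{-1}(u_1)-f^{-1}(u_2)\rangle=\langle f(x_1)-f(x_2),\,x_1-x_2\rangle\ge \mu\Vert x_1-x_2\Vert^2\ge \frac{\mu}{L^2}\Vert u_1-u_2\Vert^2,$$
which is the claimed $\tfrac{\mu}{L^2}$-strong monotonicity of $f^{-1}$. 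The computations are short and essentially just a rearrangement; the only step needing genuine care is the preliminary one, namely verifying that $f^{-1}$ is single-valued with full domain, since otherwise the inequalities above would only hold along selections and the statement would be vacuous. Once that is secured, both estimates follow as above.
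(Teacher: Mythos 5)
Your proof is correct, and for the Lipschitz half it is essentially the paper's own argument: the paper's display $\Vert x-y\Vert=\Vert f\circ f^{-1}(x)-f\circ f^{-1}(y)\Vert\ge \mu\Vert f^{-1}(x)-f^{-1}(y)\Vert$ is exactly your Cauchy--Schwarz/strong-monotonicity estimate, written after composing with $f$. Where you genuinely diverge is the strong-monotonicity half: the paper does not prove it at all, but outsources it to the reference \cite{IT}, whereas you give the one-line direct argument $\langle u_1-u_2,\,f^{-1}(u_1)-f^{-1}(u_2)\rangle\ge \mu\Vert x_1-x_2\Vert^2\ge \frac{\mu}{L^2}\Vert u_1-u_2\Vert^2$, obtained by feeding the lower bound $\Vert x_1-x_2\Vert\ge\frac{1}{L}\Vert u_1-u_2\Vert$ into the strong monotonicity of $f$. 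You also verify up front that $f^{-1}$ is single-valued with full domain, which the paper silently assumes. What each buys: the paper's proof is shorter and defers the harder constant to the literature; yours is self-contained and makes precise that $f^{-1}$ is an everywhere-defined single-valued map, without which both estimates would only be statements about selections. One small imprecision worth fixing: Proposition \ref{minty} as stated yields ${\rm rge}(f+\lambda I)=H$, not ${\rm rge}(f)=H$ directly; to conclude surjectivity, apply it instead to $g:=f-\mu I$, which is monotone and Lipschitz with full domain (hence maximal monotone), so that ${\rm rge}(g+\mu I)={\rm rge}(f)=H$. This is a standard one-line repair, not a genuine gap.
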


\begin{proof} For the first property of $f^{-1}$, see, e.g., \cite{IT}. The Lipschitz continuity of $f^{-1}$ follows from the observation that for all $x,y\in H_1$, one has
\[\Vert x-y \Vert=\Vert f\circ f^{-1}(x)-f\circ f^{-1}y \Vert\ge \mu \Vert f^{-1}(x) -f^{-1}(y)\Vert.
\]		
\end{proof}

The following result shows that if $f$ is strongly monotone, then  the QVI (\ref{quasi1}) can be reduced into a strongly monotone  Variational Inequality.

\begin{proposition} \label{charac}
If $f$ is strongly monotone, then $x^*$ is a solution of the QVI (\ref{quasi1}) if and only if $y^*=(f^{-1}+D)f(x^*)=x^*+Df(x^*)$ is the unique solution of the  strongly monotone Variational Inequality
\beq\label{dual}
0\in (f^{-1}+D)^{-1}(y^*)+N_\Omega (y^*).
\eeq
\end{proposition}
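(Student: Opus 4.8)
The plan is to pass through the equivalent monotone inclusion (\ref{reek}) established in Proposition \ref{equi} and then perform the change of variable $y^\ast = x^\ast + Df(x^\ast)$, exploiting the invertibility of $f$ to translate between the two formulations. Concretely, I would first recall from the chain of equivalences in the proof of Proposition \ref{equi} that, setting $\lambda = -f(x^\ast)$, the solution $x^\ast$ of the QVI (\ref{quasi1}) is characterized by
\[
-f(x^\ast) \in N_\Omega\bigl(x^\ast + Df(x^\ast)\bigr).
\]
This identity is the bridge: the entire proposition amounts to showing that the right-hand inclusion is the same object as the VI (\ref{dual}) after the substitution $y^\ast = x^\ast + Df(x^\ast)$.

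Next I would set up the change of variable. Since $f$ is $\mu$-strongly monotone and $L$-Lipschitz, Lemma \ref{inve} yields that $f^{-1}$ is single-valued, $\frac{\mu}{L^2}$-strongly monotone and $\frac{1}{\mu}$-Lipschitz. Because $D$ is monotone (it is monotone under the standing $P$-passivity assumption, here with $P=I$), the operator $f^{-1}+D$ is strongly monotone and Lipschitz, hence a bijection of $H$ with single-valued, strongly monotone, Lipschitz inverse $(f^{-1}+D)^{-1}$. Introducing $u^\ast := f(x^\ast)$, one has $x^\ast = f^{-1}(u^\ast)$ and therefore
\[
y^\ast = x^\ast + Df(x^\ast) = f^{-1}(u^\ast) + Du^\ast = (f^{-1}+D)(u^\ast),
\]
equivalently $u^\ast = (f^{-1}+D)^{-1}(y^\ast) = f(x^\ast)$. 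Substituting this into (\ref{dual}) turns it into $0 \in f(x^\ast) + N_\Omega(y^\ast)$, i.e.\ $-f(x^\ast) \in N_\Omega(x^\ast+Df(x^\ast))$, which is exactly the characterization above; this settles the forward implication. The converse runs the same substitution in reverse: given a solution $y^\ast$ of (\ref{dual}), define $u^\ast := (f^{-1}+D)^{-1}(y^\ast)$ and $x^\ast := f^{-1}(u^\ast)$, so that $y^\ast = x^\ast + Df(x^\ast)$ and $-f(x^\ast)\in N_\Omega(y^\ast)$, whence $x^\ast$ solves the QVI by Proposition \ref{equi}.

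Finally, for uniqueness I would note that the operator $(f^{-1}+D)^{-1} + N_\Omega$ governing (\ref{dual}) is the sum of the strongly monotone single-valued map $(f^{-1}+D)^{-1}$ and the monotone operator $N_\Omega$, hence strongly monotone; a strongly monotone inclusion admits at most one zero, and existence follows from maximal monotonicity via Minty's theorem (Proposition \ref{minty}). This pins down $y^\ast$ uniquely. The only genuinely substantive step is the one carried out in the second paragraph, namely establishing that $(f^{-1}+D)^{-1}$ is single-valued and strongly monotone so that the expression $(f^{-1}+D)^{-1}(y^\ast)$ is a well-defined point equal to $f(x^\ast)$; everything else is the bijective relabeling $u^\ast = f(x^\ast)$, $y^\ast = x^\ast + Df(x^\ast)$, which is routine once the invertibility of $f$ and of $f^{-1}+D$ is in hand.
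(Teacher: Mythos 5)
Your proof is correct and follows essentially the same route as the paper: the change of variable $y^*=x^*+Df(x^*)=(f^{-1}+D)f(x^*)$, the identity $f(x^*)=(f^{-1}+D)^{-1}(y^*)$, and the properties of $f^{-1}+D$ obtained from Lemma \ref{inve} together with the monotonicity of $D$. You merely spell out what the paper leaves implicit (the converse substitution and the uniqueness argument via strong monotonicity of $(f^{-1}+D)^{-1}+N_\Omega$), which is a faithful elaboration rather than a different approach.
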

\begin{proof}
Suppose that $f$ is strongly monotone and $0 \in  f(x^*)+{{N_\Omega}}(x^*+Df(x^*))$. Then, $f^{-1}+D$ is strongly monotone, Lipschitz continuous and so is $(f^{-1}+D)^{-1}$.  Let $y^*=x^*+Df(x^*)=(f^{-1}+D)f(x^*)$, which implies that $f(x^*)=(f^{-1}+D)^{-1}(y^*)$. Then, we obtain the conclusion.
  \end{proof}

\begin{remark}
The inclusion (\ref{dual}) can be considered as the dual form of (\ref{reek}). 
\end{remark}

\section{Applications to Nash Games} \label{sec5}
Now, we give some examples in finding Nash {{quasi-equilibria}} using our developed results.
 Let us consider a game with $m$ players. {{Each player $i\in S:=\{1,2,\ldots,m\}$ chooses a strategy vector in a set of available strategies $C^i(x^{-i})$, which depends on the strategies chosen by the other players $x^{-i}:=(x^1,\ldots,x^{i-1},$ $x^{i+1},\ldots,x^m )$.}} 
  The cost function $g_i(x^i,x^{-i})$ of {the} player $i$ depends   on both his strategy $x^i$ and the strategies of {the} other players $x^{-i}$.
\begin{definition}
A  Nash {{quasi-equilibrium}} of the game is a vector $x=(x^1,x^2,\ldots,x^m)$ such that
\beq\label{Nash}
x^i\; \;\;{\rm minimizes}\;\;\; g_i(\cdot,x^{-i})\;\;\;{\rm subject\;to}\;\;x^i\in C^i(x^{-i}).
\eeq
\end{definition}

\begin{example}
Suppose that for each $i\in S$, the function $g_i: \R^{n\times m}\to \R$  has the separable  form $g_i(x^i,x^{-i})=\langle g_{i1}(x^{-i}), x^i \rangle+g_{i2}(x^{-i})$, where $g_{i1}: \R^{n\times (m-1)}\to \R^n$  is  Lipschitz continuous and  $g_{i2}: \R^{n\times (m-1)}\to \R$. In addition,  the strategy $x^i$ is restricted to a convex set $C^i(x^{-i}):=K_i-c_i  g_{i1}(x^{-i})$ where $K_i$ is convex and $c_i\ge0$.  Let $D$ be the diagonal matrix $D={\rm diag}(c_1,c_2,\ldots,c_m)$, 
$
\Omega=\prod_{i=1}^{m}K_i
$
and
$$
f(x)=(  g_{11}(x^{-1}),\ldots,   g_{m1}(x^{-m}))^T.
$$
Note that $(\ref{Nash})$ is equivalent to 
\baqn
0\in  g_{i1}(x^{-i})+N_{K_i-c_i  g_{i1}(x^{-i})}(x^i)= g_{i1}(x^{-i})+N_{K_i}(x^i+c_i  g_{i1}(x^{-i})),\;\;\forall \;i=1, 2, \ldots,m.
\eaqn
Consequently, we have 
\beq
0\in f(x)+N_\Omega(x+Df(x)),
\eeq
which is in the form of $(\ref{quasi1})$ and equivalently, in the form of (\ref{reek}).
\end{example}
\begin{example}
Next, assume that $g_i$ can be decomposed as 
\beq\label{decompo}
g_i(x^i,x^{-i})=f_i(x^i,x^{-i})+h_i(x^i+u_i(x^{-i})),
\eeq
where 
\begin{itemize}
\item ${f_i: \R^{n\times m}}\to {\R}$ {is such that}  $f_i(\cdot,x^{-i})$ is convex and differentiable;\\
\item $u_i: \R^{n\times (m-1)}\to {\R^n}$ is a Lipschitz continuous {mapping} which acts as perturbation  of the strategies $x^{-i}$ of the other players,  affecting the strategy $x^i$;\\
\item $h_i: \R^{n}\to {\R}$ is a convex function.
\end{itemize}
Then, $(\ref{Nash})$ is equivalent to 
\beq\label{sufcon}
0\in \nabla_{x^i} f_i(x^i,x^{-i})+\partial h_i(x^i+u_i(x^{-i})),\;\; i= 1,2\ldots,m.
\eeq
Let 
$$
f(x)=( \nabla_{x^1} f_1(x^1,x^{-1}),\ldots,  \nabla_{x^m} f_m(x^m,x^{-m}))^T,
$$
$$
F(x)=(\partial h_1(x^1),\ldots, \partial h_m(x^m))^T,
$$
and 
$$
u(x)=(u_1(x^{-1}), \ldots, u_m(x^{-m}))^T.
$$
Then, $F: \R^{n\times m}\to \R^{n\times m}$ is a maximal monotone operator and  (\ref{sufcon}) can be rewritten as 
\beq\label{statec}
0\in f(x)+F(x+u(x)).
\eeq
Suppose that for each $i$, the function $f_i$ is linear with respect to $x_i$, i.e., $f_i(x^i,x^{-i})=\langle f_{i1}(x^{-i}), x^i \rangle+f_{i2}(x^{-i})$,  where $f_{i1}: \R^{n\times (m-1)}\to \R^n$  is  Lipschitz continuous and  $f_{i2}: \R^{n\times (m-1)}\to \R$ and $u_i(x^{-i})=d_i f_{i1}(x^{-i})$ for some $d_i\ge0$.  Let  $D={\rm diag}(d_1,d_2,\ldots,d_m)$. Then, (\ref{statec}) becomes
\beq\label{stat}
0\in f(x)+F(x+Df(x)),
\eeq
which can be rewritten in the form of $(\ref{reeg1})$.
\end{example}

\section{Numerical Example}\label{sec6}
In this section, we give a simple example in $\R^2$  to illustrate our theoretical results. 
 Let us consider the system $(\mathcal L)$ with 
  $$
  f=\left( \begin{array}{ccc}
9 &\;\; -1 \\ \\
1&\;\;8
\end{array} \right), B=C=I, 
 D = \left( \begin{array}{ccc}
0 &\;\; 0 \\ \\
0 &\;\; 1 
\end{array} \right), F(x)=\left( \begin{array}{ccc}
{\rm Sign}(x_1) \\ \\
{\rm Sign}(x_2)
\end{array} \right),
  $$
where  
\[
{\rm Sign}(\alpha) = \begin{cases}
1 & \text{if } \alpha > 0, \\
[-1, 1] & \text{if } \alpha = 0, \\
-1 & \text{if } \alpha < 0.
\end{cases}
\]
In practice, to simplify the computation, one usually uses the explicit scheme and replaces the ${\rm Sign}$ function by the classical sign function as follows
 \begin{subequations}
\label{eq:tot}
\begin{empheq}[left={}\empheqlbrace]{align}
  & \frac{x_{n+1}-x_n}{h} = -f(x_n)+B\lambda_n,\\
  & y_n=Cx_n+D\lambda_n, t\geq 0,\\
  &  \lambda_{n+1}   \in -{\rm sign}(y_n), \;t\ge 0.
\end{empheq}
\end{subequations}
However, this explicit scheme usually does not converge as we can see in Figure 1 with $x_0=(1 \;\;2)^T$ and $h=0.04$. On the other hand, if we use Theorem \ref{mthe} with $x_0=(1 \;\;2)^T$  and $\gamma=0.5$, the Algorithm 1 converges very fast to $x^*=(0,0)$, which is an equilibrium of the given system (Figure 2).
 \begin{figure}[h!]\label{ex1}
\begin{center}
\includegraphics[scale=0.66]{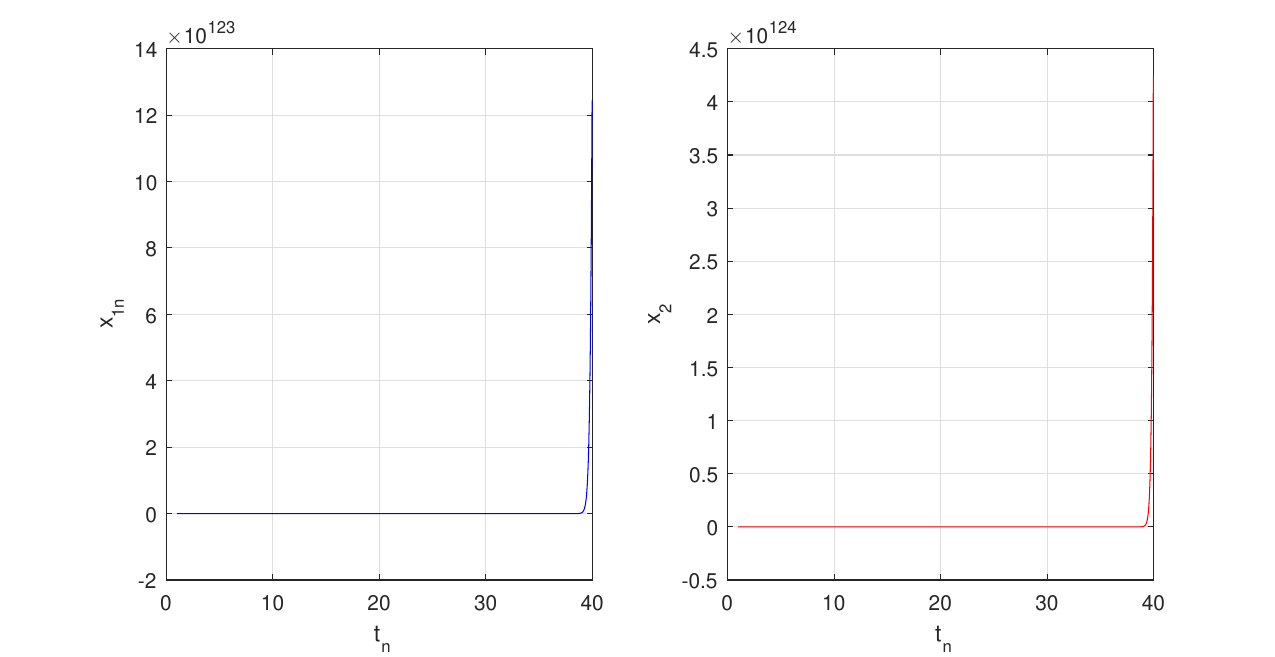}
\caption{The explicit scheme does not converge}
\end{center}
\end{figure}
 \begin{figure}[h!]\label{ex1}
\begin{center}
\includegraphics[scale=0.66]{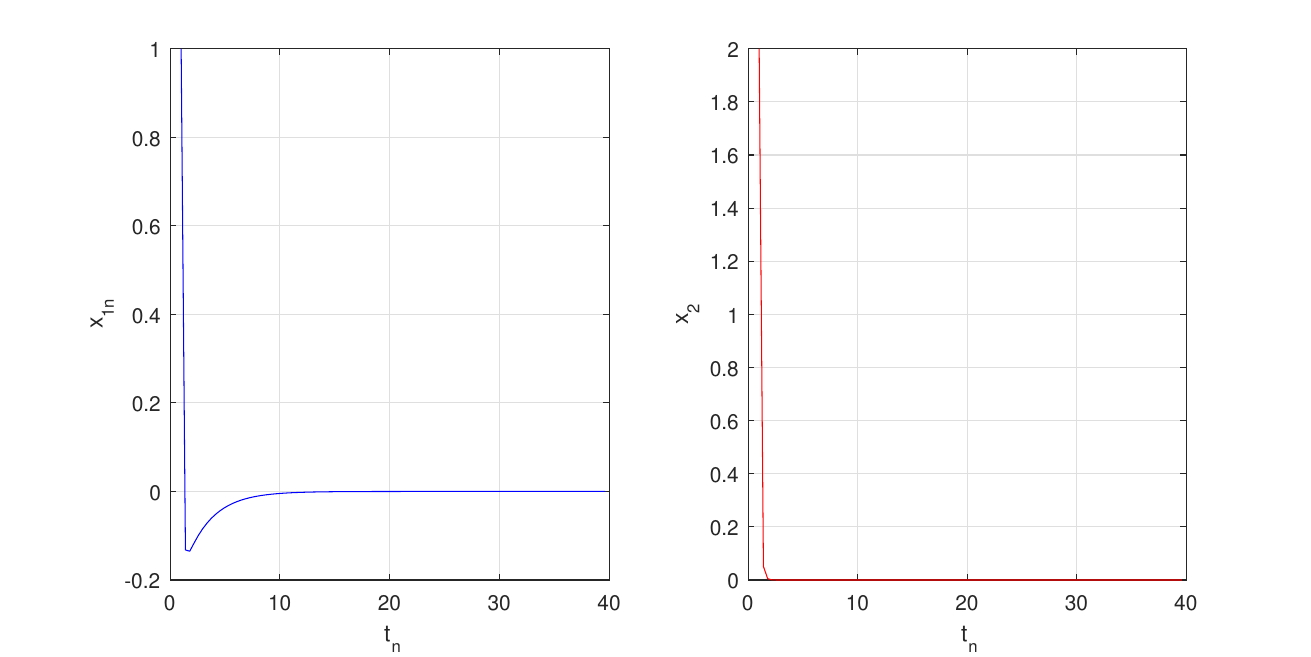}
\caption{The  convergence of Algorithm 1}
\end{center}
\end{figure}
\section{Conclusions}\label{sec7}
In this paper,  a method to compute   equilibria for  general set-valued Lur'e dynamical systems is provided. The obtained results can be applied to solve numerically equilibria for   set-valued Lur'e dynamical systems and   Nash {{quasi-equilibria}} in game theory.  It would be interesting if we could design algorithms for finding the  equilibria using less the implicit scheme than the semi-implicit scheme (\ref{discs}), which do not involve the resolvents of the composition operators to save the computational cost but still assuring the convergence.  This open question deserves further investigations. \\

\noindent $\mathbf{Acknowledgements}$\\

The authors would like to thank Professor Bernard Brogliato, a pioneer and an outstanding researcher,  for fruitful comments and suggestions.


\begin{thebibliography}{}


 
\bibitem{Acary}{\sc V. Acary, B. Brogliato}, {\it  Numerical Methods for Nonsmooth Dynamical Systems. Applications
in Mechanics and Electronics}. Springer, LNACM 35, 2008


\bibitem{ahl}  { \sc S. Adly, A. Hantoute, B. K. Le}, {\it Nonsmooth Lur'e dynamical systems in Hilbert
spaces}, Set-Valued Var. Anal. 24 (1), 13-35, 2016

\bibitem{ahl2}  { \sc S. Adly, A. Hantoute, B. K. Le}, {\it Maximal monotonicity and cyclic-monotonicity arising in nonsmooth Lur'e dynamical systems}, J. Math. Anal. Appl. 448(1), 691--706, 2017

\bibitem{al1}  { \sc S. Adly, B. K. Le}, \textit{Douglas--Rachford splitting algorithm for solving state-dependent maximal monotone inclusions}, Optim. Lett. 15,  2861--2878, 2021

\bibitem{al2}  { \sc S. Adly, B. K. Le}, {\it New general fixed-point approach to compute the resolvent of composite operators}, arXiv:2502.01664. 2025

\bibitem{Anh}{\sc L. Q. Anh, P. Q. Khanh, N. H. Quan}, {\it Equilibrium Problems: Existence, Stability, and Approximation}, Springer Monograph, 2025

\bibitem{Attouch0}   {\sc  H. Attouch, A. Cabot}, {\it Convergence rates of inertial forward-backward algorithms}, SIAM J. Optim. 28(1), 849--874, 2018

\bibitem{Attouch} { \sc H. Attouch, A. Cabot}, {\it Convergence of a relaxed inertial forward--backward algorithm for structured monotone inclusions}, Appl. Math. Optim. 80, 547--598, 2019

\bibitem{AC} {\sc J. P. Aubin, A. Cellina}, {\it Differential Inclusions. Set-Valued Maps and Viability Theory}, Spinger, Berlin, 1984

\bibitem{Bauschke}   {\sc H. H. Bauschke, P. L. Combettes}, { \it Convex Analysis and Monotone Operator
Theory in Hilbert Spaces}, Springer, Berlin, 2011

\bibitem{Bensoussan}  {\sc A. Bensoussan, M. Goursat, J. L. Lions},   {\it Contr\^ole impulsionnel et in\'equations quasi-variationnelles}, C. R. Acad. Sci.
Paris Ser. A 276, 1284--1973, 1973

\bibitem{Bliemer}   {\sc  M. Bliemer, P. Bovy},  {\it Quasi-variational inequality formulation of the multiclass dynamic traffic assignment
problem,} Trans. Res.  B-Meth 37, 501--519, 2003

\bibitem{Brezis}{\sc H. Brezis}, {\it Op\'{e}rateurs Maximaux Monotones et Semi-groupes de Contractions dans
les Espaces de Hilbert}, Math. Studies 5, North-Holland American Elsevier, 1973

\bibitem{br1} { \sc B. Brogliato}, {\it Analysis of the implicit Euler time-discretization of a class of
descriptor-variable linear cone complementarity systems}, J. Convex Anal. 29(2), 481--517, 2022

\bibitem{bg2}{\sc B. Brogliato, D. Goeleven},  {\it Well-posedness, stability and invariance results for a class of multivalued Lur'e dynamical systems}, Nonlinear Anal. Theory Meth. Appl.  74, 195--212, 2011

\bibitem{bh}{\sc B. Brogliato, W. P. M. H. Heemels},  {\it Observer design for Lur'e systems with multivalued mappings: a passivity approach}, IEEE Trans. Auto. Control 54(8), 1996--2001, 2009

\bibitem{blme}{\sc B. Brogliato, R. Lozano, B. Maschke, O. Egeland}, {\it Dissipative Systems Analysis and Control}, Springer, London, 3rd Edition, 2016 

\bibitem{BT}{\sc B. Brogliato, A. Tanwani}, {\it Dynamical systems coupled with monotone set-valued operators: formalisms, applications, well-posedness, and stability}. SIAM Review 62 (1), 3--129, 2020


\bibitem{cs}{\sc M. K. Camlibel, J. M. Schumacher}, {\it Linear passive systems and maximal monotone mappings}, Math. Program. 157 (2),  397--420, 2016


\bibitem{Chen}   {\sc G. H. G. Chen, R. T. Rockafellar}, {\it Convergence rates in forward-backward splitting}, SIAM J. Optim. 7(2), 421--444, 1997


\bibitem{Huang}{\sc J. Huang, Z. Han, X. Cai, L. Liu},  {\it Adaptive full-order and reduced-order observers for the Lur'e differential inclusion system}, Commun. Nonlinear Sci. Numer. Simul. 16, 2869--2879, 2011

\bibitem{Huang1}{\sc  J. Huang, L. Yu, M. Zhang, F. Zhu, Z. Han},   {\it Actuator fault detection and estimation for the Lur'e differential inclusion system}, Appl. Math. Model 38, 2090--2100, 2014

\bibitem{IT}    {\sc H. Iiduka, W. Takahashi}, {\it Strong convergence theorems for nonexpansive mappings and
inverse-strongly-monotone mappings}, Nonlinear Anal. 61, 341--350, 2005
 
\bibitem{Kravchuk}   {\sc A. S. Kravchuk, P. J. Neittaanm\"{a}ki}, {\it Variational and Quasi-Variational Inequalities in Mechanics}, Springer, Netherlands, 2007

\bibitem{L1} {\sc B. K. Le}, {\it On a class of  Lur'e dynamical systems with state-dependent set-valued feedback}, Set-Valued Var. Anal. 28, 537--557, 2020

\bibitem{L2} {\sc B. K. Le}, {\it Well-posedness and nonsmooth Lyapunov pairs for state-dependent maximal monotone differential inclusions}, Optimization 69, 1187--1217, 2020

\bibitem{L3} {\sc B. K. Le}, {\it Sliding mode observers for time-dependent set-valued Lur'e systems subject to uncertainties}, J. Optim. Theory Appl. 194, 290--305, 2022

\bibitem{LT} {\sc B. K. Le, M. Th\'era}, {\it On a new simple algorithm to compute the resolvents}, Optim. Lett. 17, 1271--1277, 2023


  \bibitem{Micchelli}   {\sc Ch. A. Micchelli, L. Chen, Y. Xu}, \textit{Proximity algorithms for image models: denoising}, Inverse
Prob. 27(4), 045009, 2011


\bibitem{Nesterov} {\sc Y. Nesterov, L. Scrimali},   {\it Solving strongly monotone variational and quasi-variational inequalities,} Discrete Cont. Dyn. Syst.  31(4), 1383--1396, 2011


\bibitem{Pang}   {\sc J. S. Pang, M. Fukushima},  {\it Quasi-variational inequalities, generalized Nash equilibria and multi-leader-follower
games}, Comput. Manag. Sci.  2, 21--56, 2005

\bibitem{Pennanen}   {\sc T. Pennanen}, {\it Dualization of generalized equations of maximal monotone type}, SIAM J. Op-
tim. 10, 809--835, 2000

\bibitem{Rockafellar} {\sc R. T. Rockafellar},  {\it Monotone operators and the proximal point algorithm}, SIAM J.
Control Optim. 14(5), 877--898, 1976

\bibitem{Shehu}   {\sc Y. Shehu}, {\it Convergence results of forward-backward algorithms for sum of monotone operators in
Banach spaces}, Results Math., 74, 138, 2019

%

\bibitem{tbc} {\sc A. Tanwani, B. Brogliato, C. Prieur,}  {\it Well-posedness and output regulation for implicit time-varying evolution variational inequalities}, SIAM J. Control Optim. 56(2), 751--781, 2018

\bibitem{Tseng} {\sc P. Tseng},   {\it A modified forward-backward splitting method for maximal monotone mappings}, SIAM J.
Control Optim. 38, 431--446, 2000

\end{thebibliography}
\end{document}